\subjclass[2010]{Primary: 37D20; Secondary: 37C70}
\keywords{Hyperbolic Attractor, Sink, Three-dimensional flow.}
\thanks{Partially supported by CNPq, FAPERJ and PRONEX/DS from Brazil.}
\newcommand{\cl}{\operatorname{Cl}}
\newcommand{\diff}{\operatorname{\mathfrak{X}^1(M)}}
\newcommand{\per}{\operatorname{Per}}
\newcommand{\sink}{\operatorname{Sink}}
\newcommand{\sad}{\operatorname{Saddle}}
\newcommand{\di}{\operatorname{div}}
\newcommand{\dis}{\operatorname{Dis}}
\newcommand{\al}{\alpha}
\newcommand{\be}{\beta}
\newcommand{\ga}{\gamma}
\newcommand{\de}{\delta}
\newcommand{\la}{\lambda}
\newcommand{\si}{\sigma}
\newcommand{\Mundo}{\mathfrak{X}^{1}(M)}
\newcommand{\cum}{C^1}
\newcommand{\ov}{\overline}
\newcommand{\SK}{{\mathcal K}}
\newcommand{\SU}{{\mathcal U}}
\newcommand{\SV}{{\mathcal V}}
\newcommand{\card}{\operatorname{card}}
\newcommand{\deter}{\operatorname{det}}
\newcommand{\ang}{\operatorname{angle}}
\newcommand{\Sink}{\operatorname{Sink}}
\newcommand{\presed}{\operatorname{PreSaddle}_d(X)}
\newcommand{\sadled}{\operatorname{Saddle}_d}
\newtheorem{theorem}{Theorem}[section] 
\newtheorem{lemma}[theorem]{Lemma}     
\newtheorem{proposition}[theorem]{Proposition}
\newtheorem{defi}[theorem]{Definition}
\newtheorem*{ara}{Araujo's Theorem for nonsingular flows}
\title[On Araujo's Theorem for flows]
 {On Araujo's Theorem for flows} 
\author{A. Arbieto, C. A. Morales, B. Santiago}
\address{Instituto de Matem\'atica, Universidade Federal do Rio de Janeiro, P. O. Box 68530, 21945-970 Rio
de Janeiro, Brazil.}
\email{arbieto@im.ufrj.br}
\email{bruno\_santiago@im.ufrj.br}
\begin{document}
\maketitle


\section{Introduction} 
\label{intro}

\noindent
Araujo proved in his thesis \cite{A}
that a $C^1$ generic surface diffeomorphism has either infinitely many sinks (i.e. attracting periodic orbits) or finitely many hyperbolic attractors
with full Lebesgue measure basin.
The goal of this paper is to extend this result to $C^1$ vector fields on compact connected boundaryless manifolds $M$ of dimension $3$
(three-dimensional flows for short).
More precisely, we shall prove that a $C^1$ generic three-dimensional flow without singularities has either
infinitely many sinks or finitely many hyperbolic attractors with full Lebesgue measure basin.
Notice that this result implies Araujo's \cite{A} by the standard
suspension procedure. We stress that
the only available proofs of Araujo's Theorem \cite{A} are the original one \cite{A} and
third author's dissertation \cite{s} under the guidance of the first author (both in Portuguese).
A proof of a weaker result, but with the full Lebesgue measure condition replaced by openess and denseness, was sketched in the draft note \cite{po}.
Let us state our result in a precise way.

The space of three-dimensional flows equipped with the $C^1$ topology will be denoted by $\diff$.
The flow of $X\in\diff$ is denoted by $X_t$, $t\in\mathbb{R}$.
By a {\em singularity} we mean a point $x$ where $X$ vanishes, i.e., $X(x)=0$.
A subset of $\diff$ is {\em residual} if it is a countable intersection of open and dense subsets.
We say that a {\em $C^1$ generic three-dimensional flow satisfies a certain property P} if
there is a residual subset $\mathcal{R}$ of $\diff$ such that P holds for every element of $\mathcal{R}$.
The closure operation is denoted by $\cl(\cdot)$.

Given $X\in\diff$ we denote by $O_X(x)=\{X_t(x): t\in\mathcal{R}\}$
the orbit of a point $x$. By an orbit of $X$ we mean a set $O=O_X(x)$ for some point $x$.
An orbit is {\em periodic} if it is closed as a subset of $M$. The corresponding points are called
{\em periodic points}.
Clearly $x$ is a periodic point if and only if there is a minimal $t_x>0$ satisfying
$X_{t_x}(x)=x$ (we use the notation $t_{x,X}$ to indicate dependence on $X$).
Clearly if $x$ is periodic, then 
$DX_{t_x}(x):T_xM\to T_xM$ is a linear automorphism having
$1$ as eigenvalue with eigenvector $X(x)$.
The remainder eigenvalues (i.e. not corresponding to $X(x)$) will be referred to
as the {\em eigenvalues of $x$}.
We say that a periodic point $x$ is a {\em sink}
if its eigenvalues are less than one (in modulus).
We say that $X$ {\em has infinitely many sinks} if it has infinitely many sinks corresponding to different orbits of $X$.

Given a point $x$ we define the {\em omega-limit set},
$$
\omega(x)=\left\{y\in M : y=\lim_{t_k\to\infty}X_{t_k}(x)\mbox{ for some integer sequence }t_k\to\infty\right\}.
$$
(when necessary we shall write $\omega_X(x)$ to indicate the dependence on $X$.)
We call $\Lambda\subset M$
{\em invariant} if $X_t(\Lambda)=\Lambda$ for all $t\in\mathbb{R}$;
{\em transitive} if there is $x\in\Lambda$ such that
$\Lambda=\omega(x)$; and {\em non-trivial} if it does not reduces to a single orbit.
The {\em basin} of any subset $\Lambda\subset M$ is defined by
$$
W^s(\Lambda)=\{y\in M : \omega(y)\subset \Lambda\}.
$$
(Sometimes we write $W^s_X(\Lambda)$ to indicate dependence on $X$).
An {\em attractor} is a transitive set $A$ exhibiting a neighborhood $U$ such that
$$
A=\displaystyle\bigcap_{t\geq0}X_t(U).
$$

A compact invariant set $\Lambda$ of $X$ is {\em hyperbolic} if
there are a continuous invariant tangent bundle decomposition
$T_\Lambda M=E_\Lambda^s\oplus E^X_\Lambda\oplus E_\Lambda^u$ over $\Lambda$
and positive numbers $K,\lambda$ such that $E^X_x$ is generated by $X(x)$,
$$
\|DX_t(x)/E_x^s\|\leq Ke^{-\lambda t}
\quad\mbox{ and } \quad
\|DX_{-t}(x)/E^u_{X_t(x)}\|\leq K^{-1}e^{\lambda t},
\quad\forall (x,t)\in \Lambda\times\mathbb{R}^+.
$$
With these definitions we can state our result.

\begin{ara}
A $C^1$ generic three-dimensional flow without singularities has either infinitely many sinks or finitely many hyperbolic
attractors with full Lebesgue measure basin.
\end{ara}

This result is closely related to Corollary 1.1 in \cite{mp} which asserts that a $C^1$ generic three-dimensional flow has
either an attractor or a {\em repeller} (i.e. an attractor for the time-reversed flow). Indeed,
Araujo's Theorem for nonsingular flows implies the existence of at least one hyperbolic attractor, but 
in the nonsingular case only.

The idea of the proof of Araujo's Theorem for nonsingular flows is as follows.
To any three-dimensional flow $X$ we define the {\em dissipative region} as the closure
of the periodic points where the product of the eigenvalues is less than $1$ in modulus.
Two important properties of the dissipative region for $C^1$ generic flows without singularities $X$ will be proved.
The first one in Theorem \ref{move-attractor} is that there is a full Lebesgue measure set $L_X$
of points whose omega-limit set intersect the dissipative region.
The second one in Theorem \ref{peo} is that if $X$ has only a finite number of sinks, then the dissipative region is
hyperbolic, and so, splits into a finite disjoint
collection of homoclinic classes and sinks.
These properties allow us to apply the results in \cite{cmp} concerning the neutrality
of the homoclinic classes. In particular, we conclude for $C^1$ generic three-dimensional flows $X$ that
every point in $L_X$ is contained in the basin of the homoclinic classes and singularities in the dissipative region.
To complete the proof we will show in Theorem \ref{fui} that those homoclinic classes
in the dissipative region attracting a positive Lebesgue measure set of points are
in fact hyperbolic attractors.

\section{Dissipative Region}
\label{sec1.5}

\noindent
{\em Hereafter, we will consider three-dimensional flows without singularities only}.

Let $X$ be a three-dimensional flow.
We say that a periodic point $x$ is {\em dissipative}
if $|\det DX_{t_x}(x)|<1$. Denote by $\per_d(X)$ is the set of dissipative periodic points.
The {\em dissipative region} of $X$ is defined by
$$
\dis(X)=\cl(\per_d(X)).
$$
On the other hand, a {\em saddle} of $X$ is a periodic point having eigenvalues of modulus less and bigger than $1$.
We denote by $\sad(X)$ the set of saddles of $X$.
Define the set of dissipative saddles,
$\sad_d(X)=\per_d(X)\cap \sad(X)$.

It follows easily from these definitions that
$\sad_d(X)\cup\sink(X)\subset\per_d(X)$ hence
$\cl(\sad_d(X))\cup \cl(\sink(f))\subset\dis(X)$.
If, additionally, every periodic point is {\em hyperbolic}
(i.e. without eigenvalues of modulus $1$), then
\begin{equation}
\label{split2}
\dis(X)=\cl(\sad_d(X))\cup\cl(\sink(X)).
\end{equation}
In particular, this equality is true for {\em Kupka-Smale flows},
i.e., flows for which every periodic orbit is hyperbolic and the stable and unstable manifolds are in general position \cite{hk}.

Through any saddle $x$ it passes a pair of invariant manifolds, the so-called strong stable and unstable manifolds $W^{ss}(x)$ and $W^{uu}(x)$,
tangent at $x$ to the eigenspaces corresponding to the eigenvalue of modulus less and bigger than $1$ respectively \cite{hps}.
Saturating these manifolds with the flow we obtain the stable and unstable manifolds $W^s(x)$ and $W^u(x)$ respectively.
A {\em homoclinic point} associated to $x$ is a point where these last manifolds meet whereas a homoclinic point $q$ is {\em transverse}
if $T_qM=T_qW^s(x)+T_qW^u(x)$ and $T_qW^s(x)\cap T_qW^u(x)$ is the one-dimensional space generated by $X(q)$.
The {\em homoclinic class} associated to $x$ is the closure of the set of transverse homoclinic points $q$ associated to $x$.
A homoclinic class of $X$ is the homoclinic class associated to some saddle of $X$.
A {\em dissipative homoclinic class} is a homoclinic class associated to a dissipative saddle.

It follows easily from the Birkhoff-Smale Theorem \cite{hk}
that every homoclinic class associated to a dissipative saddle is contained in the dissipative region.
Furthermore, if $\dis(X)$ is hyperbolic, then there is a finite disjoint union
\begin{equation}
\label{split1}
\dis(X)=\left(
\displaystyle\bigcup_{i=1}^rH_i
\right)
\cup
\left(
\displaystyle\bigcup_{j=1}^l s_j
\right),
\end{equation}
where each $H_i$ is a dissipative homoclinic class and each
$s_j$ is the orbit of a sink.

Next we will introduce some results from \cite{cmp}.

Let $\Lambda$ be a compact invariant set of $X$.
We say that $\Lambda$ is {\em Lyapunov stable for $X$}
if for every neighborhood $U$ of $\Lambda$ there is a neighborhood $V\subset U$ of $\Lambda$ such that
$X_t(V)\subset U$, for all $t\geq 0$.
We say that $\Lambda$ is {\em neutral} if $\Lambda=\Lambda^+\cap \Lambda^-$
where $\Lambda^\pm$ is a Lyapunov stable set for $\pm X$.
The following can be proved as in Lemma 2.2 of \cite{cmp}.

\begin{lemma}
\label{neutral}
Let $\Lambda$ be a neutral subset of a three-dimensional flow $X$. If $x\in M$ satisfies
$\omega(x)\cap \Lambda\neq\emptyset$, then $\omega(x)\subset \Lambda$.
\end{lemma}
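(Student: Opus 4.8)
The plan is to follow the classical argument that Lyapunov stable sets absorb omega-limit sets that touch them, and then to intersect the forward and backward versions of this statement. First I would recall the elementary fact: if $\Lambda^+$ is Lyapunov stable for $X$ and $\omega(x)\cap\Lambda^+\neq\emptyset$, then $\omega(x)\subset\Lambda^+$. Indeed, fix an arbitrary neighborhood $U$ of $\Lambda^+$ and choose $V\subset U$ as in the definition of Lyapunov stability. Pick $y\in\omega(x)\cap\Lambda^+$; since $y\in\omega(x)$ there is $t_0$ with $X_{t_0}(x)\in V$, and then $X_t(x)\in U$ for all $t\geq t_0$. Hence $\omega(x)\subset\cl(U)$. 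Since $U$ was an arbitrary neighborhood of the compact set $\Lambda^+$, taking intersection over a neighborhood basis gives $\omega(x)\subset\Lambda^+$.

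Next I would run the same argument for the reversed flow. Here the key observation is that $\omega_X(x)$ is a compact invariant set, so it contains full orbits; if $\omega_X(x)\cap\Lambda^-\neq\emptyset$ then picking $z$ in this intersection, the point $z$ also lies in the alpha-limit set (with respect to $X$) of any point on a dense forward orbit inside $\omega_X(x)$, equivalently in an omega-limit set for $-X$. Then Lyapunov stability of $\Lambda^-$ for $-X$ forces that whole $(-X)$-omega-limit set, hence all of $\omega_X(x)$, into $\Lambda^-$. Concretely: choose $w\in\omega_X(x)$ whose $X$-orbit is dense in $\omega_X(x)$ (such $w$ exists because $\omega_X(x)$ is a transitive set, using that $M$ is a compact metric space and $\omega$-limit sets of flows are chain transitive, hence one may instead argue directly that $\omega_X(x)$ equals $\omega_X(w)$ for suitable $w$, or simply that $\omega_X(x)$ is invariant and the backward orbit of $z$ stays in it). Applying the first paragraph's principle to $-X$, the set $\Lambda^-$, and the point $w$ (or $z$), we get that the $(-X)$-omega-limit set of that point is contained in $\Lambda^-$; combined with invariance and minimality-type properties of $\omega_X(x)$ this yields $\omega_X(x)\subset\Lambda^-$.

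Finally I would combine the two inclusions. Under the hypothesis $\omega(x)\cap\Lambda\neq\emptyset$ and $\Lambda=\Lambda^+\cap\Lambda^-$, any point of $\omega(x)\cap\Lambda$ lies in both $\Lambda^+$ and $\Lambda^-$, so the two paragraphs above apply and give $\omega(x)\subset\Lambda^+$ and $\omega(x)\subset\Lambda^-$, whence $\omega(x)\subset\Lambda^+\cap\Lambda^-=\Lambda$, as desired. The only delicate point — and the step I expect to be the main obstacle — is the backward direction: transferring the "touching" hypothesis from $\omega_X(x)$ to an omega-limit set for $-X$. The clean way is to exploit that $\omega_X(x)$ is compact and invariant, so through the point $z\in\omega_X(x)\cap\Lambda^-$ there passes a full $X$-orbit contained in $\omega_X(x)$; its backward part accumulates somewhere in $\omega_X(x)\cap\Lambda^-$ again (or one uses chain transitivity of $\omega_X(x)$ as in Lemma 2.2 of \cite{cmp}), and this is exactly the hypothesis needed to apply the forward principle to $-X$. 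Once that bookkeeping is done, the rest is the routine neighborhood-basis argument above.
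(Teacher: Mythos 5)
Your forward direction (absorbing $\omega(x)$ into $\Lambda^+$) is exactly what the paper does, and it is correct. The backward direction, however, has a genuine gap, and it is precisely at the point you yourself flag as delicate. You try to transfer the hypothesis to an $(-X)$-omega-limit set by choosing a transitive point $w$ of $\omega_X(x)$, but omega-limit sets of flows are not transitive in general (they are only internally chain transitive), so such a $w$ need not exist. Your fallback --- take $z\in\omega_X(x)\cap\Lambda^-$ and look at its $\alpha$-limit set $\alpha_X(z)=\omega_{-X}(z)$ --- does not close the loop either: since $z\in\Lambda^-$ and $\Lambda^-$ is invariant, you indeed get $\alpha_X(z)\subset\Lambda^-$, but this says nothing about the rest of $\omega_X(x)$, and the ``minimality-type properties'' you appeal to simply do not hold.

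The paper's argument sidesteps all of this by applying Lyapunov stability of $\Lambda^-$ for $-X$ to the point $x$ itself, not to some auxiliary point inside $\omega(x)$. Concretely: fix a neighborhood $U$ of $\Lambda^-$ and pick $V\subset U$ with $X_{-t}(V)\subset U$ for all $t\geq 0$. Since $\omega(x)\cap\Lambda^-\neq\emptyset$, there is $t_0>0$ with $X_{t_0}(x)\in V$, and then $X_{t_0-t}(x)\in U$ for all $t\geq 0$; taking $t=t_0$ gives $x\in U$. As $U$ is arbitrary and $\Lambda^-$ is compact, $x\in\Lambda^-$. Now invariance of $\Lambda^-$ gives the entire forward orbit, hence $\omega(x)$, inside the closed set $\Lambda^-$. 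Combined with your (correct) forward half and $\Lambda=\Lambda^+\cap\Lambda^-$, this finishes the proof. The moral is that ``Lyapunov stable for $-X$'' is a statement about trapping \emph{backward} time, so the natural thing to trap is $x$ itself by running time backward from a late point $X_{t_0}(x)$ near $\Lambda^-$; no transitivity or chain-recurrence machinery is needed.
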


\begin{proof}
Since $\omega(x)\cap \Lambda\neq\emptyset$, we get $\omega(x)\cap \Lambda^+\neq\emptyset$ and
$\omega(x)\cap\Lambda^-\neq\emptyset$.
Since $\omega(x)\cap\Lambda^+\neq\emptyset$ and $\Lambda^+$ is Lyapunov stable for $x$, we get $\omega(x)\subset \Lambda^+$.
Since $\omega(x)\cap \Lambda^-\neq\emptyset$ and $\Lambda^-$ is Lyapunov stable for $-X$, we get $x\in \Lambda^-$ so
$\omega(x)\subset \Lambda^-$. So $\omega(x)\subset \Lambda^+\cap\Lambda^-=\Lambda$.
\end{proof}

For every subset $\Lambda\subset M$ we define
the {\em weak basin} by
$$
W^s_w(\Lambda)=\{x\in M:\omega(x)\cap \Lambda\neq\emptyset\}.
$$
This is also called
{\em weak region of attraction} \cite{bs}.

\begin{lemma}
\label{homoneutral}
There is a residual subset $\mathcal{R}_2$ of three-dimensional flows $X$
such that if $\dis(X)$ is hyperbolic, then there is a finite disjoint collection of dissipative homoclinic classes $H_1,\cdots, H_r$
and orbits of sinks $s_1,\cdots, s_l$ such that
$$
W^s_w(\dis(X))=\left(\displaystyle\bigcup_{i=1}^rW^s(H_i)\right)\cup\left(\displaystyle\bigcup_{j=1}^lW^s(s_j)\right).
$$
\end{lemma}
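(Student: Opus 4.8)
First I would take $\mathcal{R}_2$ to be the residual set of three-dimensional flows all of whose homoclinic classes are neutral; its existence is one of the results of \cite{cmp} referred to above (the flow version of the neutrality statements quoted there). Fix $X\in\mathcal{R}_2$ with $\dis(X)$ hyperbolic. Since the decomposition \eqref{split1} holds for any flow whose dissipative region is hyperbolic, we may write $\dis(X)=\left(\bigcup_{i=1}^{r}H_i\right)\cup\left(\bigcup_{j=1}^{l}s_j\right)$ as a finite \emph{disjoint} union of dissipative homoclinic classes $H_i$ and orbits of sinks $s_j$, all pairwise disjoint and compact; these will be the objects in the statement. I will use repeatedly that each $H_i$ and each $s_j$ is a nonempty subset of $\dis(X)$ (since $\sink(X)\subset\per_d(X)$ and every homoclinic class associated to a dissipative saddle lies in $\dis(X)$), and that $\omega(x)\neq\emptyset$ for every $x\in M$ by compactness of $M$.

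The inclusion $\supseteq$ should be immediate: if $\omega(x)\subset H_i$ then $\emptyset\neq\omega(x)=\omega(x)\cap\dis(X)$, so $x\in W^s_w(\dis(X))$, and likewise if $\omega(x)\subset s_j$. For the inclusion $\subseteq$ I would take $x\in W^s_w(\dis(X))$; then $\omega(x)$ meets $\dis(X)$, hence meets one of the disjoint pieces. If $\omega(x)\cap H_i\neq\emptyset$ for some $i$, then $H_i$ is neutral because $X\in\mathcal{R}_2$, so Lemma \ref{neutral} gives $\omega(x)\subset H_i$, that is $x\in W^s(H_i)$. If instead $\omega(x)\cap s_j\neq\emptyset$ for some $j$, then $s_j$, being the orbit of a sink, is an attracting periodic orbit and in particular Lyapunov stable for $X$; the first step in the proof of Lemma \ref{neutral}, which uses only Lyapunov stability for $X$, then yields $\omega(x)\subset s_j$, i.e. $x\in W^s(s_j)$. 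In all cases $x\in\bigcup_i W^s(H_i)\cup\bigcup_j W^s(s_j)$, which finishes the argument.

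The essential ingredient --- and the only step that is not bookkeeping --- is the generic neutrality of homoclinic classes imported from \cite{cmp}; once that is available, the rest is just the observation that the weak basin of a finite disjoint union of compact invariant sets is the union of the weak basins of the pieces, promoted to the strong basins by neutrality (for the $H_i$) and by Lyapunov stability (for the sink orbits $s_j$). The one place where I would be careful is that the $s_j$ are trivial invariant sets rather than homoclinic classes, so the neutrality result of \cite{cmp} need not apply to them verbatim; this is harmless, because attracting periodic orbits are Lyapunov stable, which is precisely what is invoked for them.
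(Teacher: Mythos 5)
Your proof is correct and follows essentially the same route as the paper: you take $\mathcal{R}_2$ to be the residual set from \cite{cmp} on which every homoclinic class is neutral, invoke the decomposition \eqref{split1}, and upgrade $\omega(x)\cap H_i\neq\emptyset$ to $\omega(x)\subset H_i$ via Lemma~\ref{neutral}. The only place you go beyond the paper's text is your careful treatment of the sink orbits $s_j$, where the paper simply asserts $x\in W^s(s_j)$ without comment; your observation that an attracting periodic orbit is Lyapunov stable for $X$, so the first half of the argument in Lemma~\ref{neutral} already yields $\omega(x)\subset s_j$, is exactly the right way to fill that small gap.
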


\begin{proof}
It follows from the results in Section 3 of \cite{cmp}
that there is a residual subset $\mathcal{R}_2$ of three-dimensional flows
$X$ whose homoclinic classes are all neutral.

Now suppose that $X\in\mathcal{R}_2$ and that $\dis(X)$ is hyperbolic.
Then, we obtain a finite disjoint collection of dissipative homoclinic classes $H_1,\cdots, H_r$
and sinks $s_1,\cdots, s_l$ satisfying (\ref{split1}).
Therefore, if $x\in W^s_w(\dis(X))$, then $\omega(x)$ intersects either $H_i$ or $s_j$
for some $1\leq i\leq r$ and $1\leq j\leq l$.
In the second case we have $x\in W^s(s_j)$ while, in the first, we also
have $x\in W^s(H_i)$ by Lemma \ref{neutral} since every homoclinic class is neutral.
This proves
$$
W^s_w(\dis(X))\subset\left(\displaystyle\bigcup_{i=1}^rW^s(H_i)\right)\cup\left(\displaystyle\bigcup_{j=1}^lW^s(s_j)\right).
$$
As the reversed inclusion is obvious, we are done.
\end{proof}

\section{Weak basin of the dissipative region}

\noindent
Hereafter we denote by $m(\cdot)$ the (normalized) Lebesgue measure of $M$.
The result of this section is the following.

\begin{theorem}
\label{move-attractor}
There is a residual subset $\mathcal{R}_6$ of three-dimensional flows
$X$ for which $m(W^s_w(\dis(X)))=1$
\end{theorem}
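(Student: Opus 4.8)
\emph{Proof plan.} The plan is to show that $M\setminus W^s_w(\dis(X))$ is Lebesgue-null for $X$ in a suitable residual set, by localizing in a distance parameter and using an invariant measure built out of Lebesgue measure together with the ergodic closing lemma. Since $x\notin W^s_w(\dis(X))$ means precisely $\omega(x)\cap\dis(X)=\emptyset$ and $\dis(X)$ is compact, this is equivalent to $\dist(X_t(x),\dis(X))\ge\delta$ for all large $t$, for some rational $\delta>0$. Setting
\[
E_\delta=\{\,y\in M:\dist(X_t(y),\dis(X))\ge\delta\text{ for all }t\ge 0\,\},
\]
a closed forward-invariant set, we get $M\setminus W^s_w(\dis(X))=\bigcup_{\delta,T\in\mathbb{Q}^+}X_{-T}(E_\delta)$, and since every $X_{-T}$ is a diffeomorphism (hence preserves Lebesgue-null sets) it suffices to prove $m(E_\delta)=0$ for each $\delta>0$. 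Note that for $y\in E_\delta$ the set $\omega(y)$ lies in the maximal invariant set $\Lambda_\delta$ inside the compact set $M\setminus U_\delta(\dis(X))$, where $U_\delta(\cdot)$ denotes the open $\delta$-neighborhood; in particular $\dist(\Lambda_\delta,\dis(X))\ge\delta$.

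Assume, for a contradiction, that $m(E_\delta)>0$, and put $\nu=m|_{E_\delta}/m(E_\delta)$. Let $\mu$ be a weak-$*$ accumulation point of the time averages $\frac1T\int_0^T(X_t)_*\nu\,dt$; then $\mu$ is an $X$-invariant probability measure. Using that $\dist(X_t(y),\omega(y))\to 0$ as $t\to+\infty$ and that $\omega(y)\subset\Lambda_\delta$ for $\nu$-a.e. $y$, together with dominated convergence (applied to indicators of the complements of closed neighborhoods of $\Lambda_\delta$ and the Portmanteau theorem), one checks that $\mu$ is supported in $\Lambda_\delta$; hence so are all its ergodic components. Next we estimate the volume exponent of $\mu$. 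Since $\di(X)$ is continuous and $\int_0^T\di(X)(X_s(y))\,ds=\log|\det DX_T(y)|$, unwinding the definition of $\mu$ gives $\int_M\di(X)\,d\mu=\lim_k\frac1{T_k}\int_{E_\delta}\log|\det DX_{T_k}(y)|\,d\nu(y)$ for a suitable sequence $T_k\to\infty$. The change of variables formula yields $\int_{E_\delta}|\det DX_{T}(y)|\,dm(y)=m(X_{T}(E_\delta))\le m(E_\delta)$ by forward invariance, so Jensen's inequality for the concave function $\log$ gives $\int_{E_\delta}\log|\det DX_T|\,d\nu\le\log(m(X_T(E_\delta))/m(E_\delta))\le 0$ for every $T$, whence $\int_M\di(X)\,d\mu\le 0$. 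Therefore some ergodic component $\mu_\xi$ of $\mu$, supported in $\Lambda_\delta$, satisfies $\int_M\di(X)\,d\mu_\xi\le 0$.

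The final step is to turn this into a contradiction, and it is the main obstacle, since here both the perturbation machinery and genericity are used. Working in the residual set $\mathcal{R}_5$ of flows for which Ma\~n\'e's ergodic closing lemma (adapted to flows) holds, we obtain $Y_n\to X$ in $\diff$ and periodic orbits $O_{Y_n}(p_n)$ contained in arbitrarily small neighborhoods of $\operatorname{supp}\mu_\xi\subset\Lambda_\delta$ with $\frac{1}{t_{p_n}}\log|\det DY_{n,t_{p_n}}(p_n)|\to\int_M\di(X)\,d\mu_\xi\le 0$. A further arbitrarily small $C^1$ perturbation of $Y_n$, supported in a thin tubular neighborhood of $O_{Y_n}(p_n)$ — which is disjoint from a fixed neighborhood of $\dis(X)$, because $p_n$ is close to $\Lambda_\delta$ — produces a flow $Z_n\to X$ for which $p_n$ becomes a \emph{dissipative} periodic point, i.e. $p_n\in\per_d(Z_n)\subset\dis(Z_n)$. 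On the other hand $\dist(p_n,\dis(X))\to\dist(\operatorname{supp}\mu_\xi,\dis(X))\ge\delta$, so for large $n$ we have $\dist(p_n,\dis(X))\ge\delta/2$. Now the map $X\mapsto\dis(X)=\cl(\per_d(X))$ is lower semicontinuous along Kupka--Smale flows (a hyperbolic dissipative periodic orbit persists and stays dissipative under $C^1$-small perturbations), so its continuity points form a residual set; taking $\mathcal{R}_6$ to be the intersection of this residual set with $\mathcal{R}_5$ and the Kupka--Smale set, continuity of $\dis$ at $X\in\mathcal{R}_6$ forces $\dis(Z_n)\subset U_{\delta/2}(\dis(X))$ for $n$ large, contradicting $p_n\in\dis(Z_n)$ with $\dist(p_n,\dis(X))\ge\delta/2$. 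This contradiction shows $m(E_\delta)=0$ for every $\delta>0$, and the theorem follows.

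The two points requiring care — where the hard work lies — are: (i) performing the closing with prescribed volume exponent \emph{and} the perturbation to dissipativeness simultaneously, while keeping the support of the perturbation away from $\dis(X)$; and (ii) verifying the lower semicontinuity of $X\mapsto\dis(X)$ on (a residual Baire subspace of) $\diff$ and hence the residuality of its continuity points, which is precisely what makes the genericity hypothesis effective. The positivity $m(E_\delta)>0$ is essential only in the second paragraph, to force the existence of an invariant (hence ergodic) measure on $\Lambda_\delta$ with non-positive volume exponent; for sets $\Lambda_\delta$ supporting only volume-expanding measures no contradiction is expected, consistently with $E_\delta$ being null.
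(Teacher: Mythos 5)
Your proposal is correct in outline, and the first half is a genuinely different route from the paper's. Where the paper first establishes (Lemma~\ref{ara1}) a full-measure set $L_X$ of points with sub-exponential volume growth via a Borel--Cantelli estimate on $\int|\det DX_{ns}|\,dm$ and then, for a bad point $x\in L_X$, extracts an invariant measure $\mu\in\mathcal M(x,X)$ from time averages along the single orbit of $x$, you instead suppose the bad set $E_\delta$ has positive Lebesgue measure, push forward the normalized Lebesgue measure $\nu=m|_{E_\delta}/m(E_\delta)$, and bound $\int\di X\,d\mu\le 0$ directly by Jensen's inequality applied to $\log|\det DX_T|$ together with the change of variables $\int_{E_\delta}|\det DX_T|\,dm=m(X_T(E_\delta))\le m(E_\delta)$. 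This is slicker and avoids Lemma~\ref{ara1} altogether; what it loses is the pointwise full-measure set $L_X$, which is itself a clean reusable statement (and in the paper's phrasing makes the conclusion $L_X\subset W^s_w(\dis(X))$ immediate). From the ergodic component onward the two arguments coincide: Ma\~n\'e's ergodic closing lemma, a Franks-type perturbation to render the closed orbit dissipative, and a semicontinuity argument to get a contradiction.

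One technical point you flag yourself and that does need the paper's fix: the map $X\mapsto\dis(X)=\cl(\per_d(X))$ is not obviously lower semicontinuous at non--Kupka--Smale flows (a dissipative periodic orbit with a unit-modulus transverse eigenvalue need not persist), and the Kuratowski residual-continuity-points argument requires lower semicontinuity on all of $\diff$, not just on a residual subset. The paper circumvents this by using the auxiliary map $S(X)=\cl(\sad_d(X))\cup\cl(\sink(X))$, which is lower semicontinuous everywhere because hyperbolic saddles and sinks persist, and then invoking $\dis(X)=S(X)$ on the Kupka--Smale residual set (identity~(\ref{split2})). You should incorporate the same device; otherwise the residuality of the continuity points of $\dis$ in your step (ii) is unjustified.
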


To prove it we will need some preliminary notation and results.

Let $\delta_p$ be the Dirac measure supported on a point $p$.
Given a three-dimensional flow $X$ and $t>0$ we define the Borel probability measure
$$
\mu_{p,t}=\frac{1}{t}\int_0^t\delta_{X_s(p)}ds.
$$
(Notation $\mu_{p,t}^X$ indicates dependence on $X$.)

Denote by $\mathcal{M}(p,X)$ as the set of Borel probability measures
$\mu=\lim_{k\to\infty}\mu_{p,t_k}$ for some sequence $t_k\to\infty$.
Notice that each $\mu\in\mathcal{M}(p,X)$ is {\em invariant}, i.e., $\mu\circ X_{-t}=\mu$ for every $t\geq 0$.
With these notations we have the following lemma.

\begin{lemma}
\label{ara1}
For every three-dimensional flow $X$ there is a full Lebesgue measure set $L_X$ of points $x$
satisfying
$$
\int\di Xd\mu\leq0,
\quad\quad\forall \mu\in \mathcal{M}(x,X).
$$
\end{lemma}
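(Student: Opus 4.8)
The plan is to deduce this from the divergence theorem applied along orbits, combined with the fact that the total volume of $M$ is finite. Fix $X\in\diff$ and consider, for each point $x\in M$ and each $t>0$, the Jacobian $J_t(x)=|\det DX_t(x)|$ of the time-$t$ map at $x$. The chain rule for the flow and the classical Liouville formula give
$$
\log J_t(x)=\int_0^t \di X(X_s(x))\,ds,
$$
so that $\frac1t\log J_t(x)=\int \di X\,d\mu_{x,t}^X$. Thus for any $\mu\in\mathcal M(x,X)$, writing $\mu=\lim_k\mu_{x,t_k}^X$, we have
$$
\int \di X\,d\mu=\lim_{k\to\infty}\frac{1}{t_k}\log J_{t_k}(x),
$$
using continuity of $\di X$ and weak-$*$ convergence. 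Hence it suffices to produce a full Lebesgue measure set $L_X$ of points $x$ for which $\limsup_{t\to\infty}\frac1t\log J_t(x)\le 0$, equivalently $\limsup_{t\to\infty}\frac1t\log J_t(x)\le 0$ along every sequence $t_k\to\infty$ realizing an element of $\mathcal M(x,X)$.

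The key step is a volume (Fubini/change of variables) argument. First I would reduce from the flow to its time-one map $f=X_1$: since $\di X$ is bounded and $\log J_t(x)=\log J_{\lfloor t\rfloor}(x)+O(1)$ uniformly, the limit along integer times controls the limit along all times, so it is enough to show $\limsup_{n\to\infty}\frac1n\log|\det Df^n(x)|\le 0$ for $m$-a.e.\ $x$. Now fix $\varepsilon>0$ and consider the set $B_{n,\varepsilon}=\{x: |\det Df^n(x)|\ge e^{n\varepsilon}\}$. By the change of variables formula,
$$
m(f^n(B_{n,\varepsilon}))=\int_{B_{n,\varepsilon}}|\det Df^n|\,dm\ge e^{n\varepsilon}\,m(B_{n,\varepsilon}),
$$
and since $f^n(B_{n,\varepsilon})\subset M$ has measure at most $m(M)=1$, we get $m(B_{n,\varepsilon})\le e^{-n\varepsilon}$. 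These are summable in $n$, so by Borel--Cantelli the set of $x$ lying in infinitely many $B_{n,\varepsilon}$ has measure zero; for $x$ outside it, $|\det Df^n(x)|< e^{n\varepsilon}$ for all large $n$, hence $\limsup_n\frac1n\log|\det Df^n(x)|\le\varepsilon$. Intersecting over $\varepsilon=1/m$, $m\in\nat$, yields a full measure set $L_X$ on which the $\limsup$ is $\le 0$, which is what we wanted.

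I expect the main technical care — rather than a genuine obstacle — to be twofold: first, justifying the passage to the limit $\int\di X\,d\mu=\lim_k\frac1{t_k}\log J_{t_k}(x)$ cleanly, which requires only that $\di X$ is continuous (hence bounded on compact $M$) and the definition of weak-$*$ convergence of $\mu_{x,t_k}^X$ to $\mu$; and second, the reduction from continuous time $t\to\infty$ to integer time, where one must check that the correction term $\log J_t(x)-\log J_{\lfloor t\rfloor}(x)=\int_{\lfloor t\rfloor}^t\di X(X_s(x))\,ds$ is bounded by $\sup_M|\di X|$ uniformly in $x$ and $t$, so it disappears after dividing by $t$. Neither step is deep; the heart of the argument is the elementary observation that volume cannot expand on average on a finite-volume manifold, extracted via Borel--Cantelli.
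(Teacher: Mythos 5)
Your proposal is correct and takes essentially the same route as the paper's own proof: Liouville's formula to turn $\int\di X\,d\mu$ into a limit of $\tfrac1{t_k}\log|\det DX_{t_k}(x)|$, a change-of-variables estimate showing $m\{x:|\det DX_n(x)|\ge e^{n\eps}\}\le e^{-n\eps}$, Borel--Cantelli, and intersecting over a countable sequence of $\eps$'s. The only cosmetic difference is in the passage from integer to real time: you note that the correction term $\log J_t-\log J_{\lfloor t\rfloor}$ is uniformly bounded by $\sup_M|\di X|$ and so vanishes after division by $t$, while the paper instead works with a small step $s$ and an $\eta$-perturbation estimate on $|\det DX_r-1|$ to get the sharper pointwise bound $|\det DX_t(x)|<(1+\delta)^t$ for all large real $t$; both suffice for the lemma.
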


\begin{proof}
For every $\delta>0$ we define
$$
\Lambda_\delta(X)=\{x: \exists N_x\in\mathbb{N}\mbox{ such that }|\det DX_t(x)|<(1+\delta)^t, \forall t\geq N_x\}.
$$

We assert that $m(\Lambda_\delta(X))=1$ for every $\delta>0$.
This assertion is similar to one for surface diffeomorphisms given by Araujo \cite{A}.

To prove it we
define
$$
\Lambda_\rho(s)=\{x: \exists N_x\in\mathbb{N}\mbox{ such that }|\det DX_{ns}(x)|<(1+\rho)^{ns}, \forall n\geq N_x\},
\forall s,\rho>0.
$$
We claim that
\begin{equation}
\label{elpe}
m(\Lambda_\rho(s))=1, \quad\quad\forall s,\rho>0.
\end{equation}
Indeed,
take $\epsilon>0$ and for each integer $n$ we define
$$
\Omega(n)=\{x:|\det DX_{ns}(x)|\geq(1+\rho)^{ns}\}.
$$
On the one hand, we get easily that
$$
\Lambda_\rho(s)=\displaystyle\bigcup_{N\in\mathbb{N}}\left(\displaystyle\bigcup_{n\geq N}\Omega(n)\right)^c,
$$
where $(\cdot)^c$ above denotes the complement operation.
On the other hand,
$$
1=\int |\det DX_{ns}(x)|dm\geq \int_{\Omega(n)}|\det DX_{ns}(x)|dm\geq (1+\rho)^{ns}m(\Omega(n)),
$$
yielding $m(\Omega(n))\leq\frac{1}{(1+\rho)^{ns}}$, for all $n$.

Take $N$ large so that
$$
\displaystyle\sum_{n=N}^\infty \frac{1}{(1+\rho)^{ns}}<\epsilon.
$$
Therefore,
$$
m(\Lambda_\rho(s))\geq 1-m\left(\displaystyle\bigcup_{n\geq N}\Omega(n)\right)
\geq 1-\displaystyle\sum_{n=N}^\infty \frac{1}{(1+\rho)^{ns}}>
1-\epsilon.
$$
As $\epsilon>0$ is arbitrary we get (\ref{elpe}). This proves the claim.

Now, we continue with the proof of the assertion.

Fix $0<\rho<\delta$ and $\eta>0$ such that
$$
(1+\eta)(1+\rho)^t<(1+\delta)^t,
\quad\quad\forall t\geq1.
$$
Choose $0<s<1$ satisfying
$$
|\det DX_r(y)-1|\leq \eta,
\quad\quad\forall |r|\leq s,\forall y\in M.
$$
Take $x\in \Lambda_\rho(s)$.
Then, there is an integer $N_x>1$ such that
$$
|\det DX_{ns}(x)|<(1+\rho)^{ns},
\quad\quad\forall n\geq N_x.
$$
Now, if $t\geq N_x$ there are $n\geq N_x$ and $0\leq r<s$ such that
$$
ns\leq t<ns+r.
$$
Thus,
$$
|\det DX_t(x)|=|\det DX_{t-ns}(X_{ns}(x))|\cdot |\det DX_{ns}(x)|
< (1+\eta)(1+\rho)^{ns}.
$$
Then, the choice of $\eta,\rho$ above yields
$
|\det DX_t(x)|<(1+\delta)^t
$ for all $t\geq N_x$
proving
$$
\Lambda_\rho(s)\subset \Lambda_\delta(X).
$$
But (\ref{elpe}) implies $m(\Lambda_\rho(s))=1$ so $m(\Lambda_\delta(X))=1$ proving the assertion.

\vspace{5pt}

To continue with the proof of the lemma, we notice that $\Lambda_{\delta'}(X)\subset \Lambda_\delta(X)$ whenever $\delta'\leq \delta$.
It then follows from the assertion that $L_X$ has full Lebesgue measure, where
$$
L_X=\displaystyle\bigcap_{k\in\mathbb{N}^+}\Lambda_{\frac{1}{k}}(X).
$$
Now, take $x\in L_X$, $\mu\in \mathcal{M}(x,X)$ and $\epsilon>0$.
Fix $k>0$ with $\log\left(1+\frac{1}{k}\right)<\epsilon$.

By definition we have $x\in \Lambda_{\frac{1}{k}}(X)$ and so there is $N_x\in\mathbb{N}^+$ such that
$$
|\det DX_t(x)|^{\frac{1}{t}}<1+\frac{1}{k},
\quad\quad\forall t\geq N_x.
$$
Take a sequence $\mu_{x,t_i}\to \mu$ with $t_i\to\infty$.
Then, we can assume $t_i\geq N_x$ for all $i$. Applying Liouville's Formula \cite{man} we obtain
$$
\int\di Xd\mu=\lim_{i\to\infty}\int\di Xd\mu_{x,t_i}=
\lim_{i\to\infty}\frac{1}{t_i}\int_0^{t_i}\di X(X_s(x))ds=
$$
$$
\lim_{i\to\infty}\frac{1}{t_i}\log|\det DX_{t_i}(x)|
\leq \log\left(1+\frac{1}{k}\right)<\epsilon.
$$
Since $\epsilon>0$ is arbitrary, we obtain the result.
\end{proof}

Again we let $X$ be a three-dimensional flow.
Given $x\in M$ we define
$N_x$ as the orthogonal complement of $X(x)$ in $T_xM$
(when necessary we will write $N^X_x$ to indicate dependence on $X$).
The union $N=\bigcup_{x\in M}N_x$ turns out to be a vector bundle with fiber $N_x$.
Denote by $\pi_x^X: T_xM \to N_x$ the corresponding orthogonal projection.
The {\em Linear Poincar\'e flow} $P^X_t(x): N_x\to N_{X_t(x)}$ of $X$ is defined to be
$$
P^X_t(x)=\pi_{X_t(x)}^X\circ DX_t(x),
\quad\quad\forall (x,t)\in M\times \mathbb{R}.
$$
We shall use the following version of the classical Franks's Lemma \cite{F} (c.f. Appendix A in \cite{bgv})

\begin{lemma}[Franks's Lemma for flows]
\label{frank}
For every three-dimensional flow $X$ and every
neighborhood $W(X)$ of $X$ there is a neighborhood $W_0(X)\subset W(X)$ of $X$ such that
for any $T>0$ there exists $\epsilon>0$ such that for any $Z\in W_0(X)$ and $p\in\per(Z)$,
any tubular neighborhood $U$ of $O_Z(p)$, any partition $0=t_0<t_1<...<t_n=t_{p,Z}$, with $t_{i+1}-t_i<T$
and any family of linear maps $L_i:N_{Z_{t_i}(p)}\to N_{Z_{t_{i+1}}(p)}$ satisfying
$$
\left\|L_i-P^Z_{t_{i+1}-t_i}(Z_{t_i}(p))\right\|<\epsilon,
\quad\quad\forall 0\leq i\leq n-1,
$$
there exists $Y\in W(X)$ with $Y=Z$ along $O_Z(p)$ and outside $U$ such that
$$
P^Y_{t_{i+1}-t_i}(Y_{t_i}(p))=L_i,
\quad\quad\forall 0\leq i\leq n-1.
$$
\end{lemma}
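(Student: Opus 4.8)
The plan is to transplant Franks's argument \cite{F} to the Linear Poincar\'e flow: each prescribed transition map $L_i$ will be produced by a small perturbation of $Z$ supported in a flow box around the corresponding arc of $O_Z(p)$, and the perturbations for distinct $i$ will have disjoint supports, so that they do not accumulate in the $C^1$ norm.

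First I would fix the data depending only on $X$ and $T$. Shrinking $W(X)$, choose $W_0(X)\subset W(X)$, $\Lambda>1$ and $\rho>0$ so that every $Z\in W_0(X)$ has $\|DZ_r(x)\|\le\Lambda$ for all $x\in M$ and $|r|\le T$, is bounded away from $0$ (there are no singularities and $M$ is compact), and has its $C^1$-ball of radius $\rho$ contained in $W(X)$. By compactness and nonsingularity, after shrinking $W_0(X)$ one obtains $\ell_0>0$ such that for every $Z\in W_0(X)$, every $q\in M$ and every $0<\tau\le T$ the map $(s,v)\mapsto Z_s(\exp_q v)$ embeds $[0,\tau]\times\{v\in N^Z_q:|v|<\ell_0\}$ onto a tubular neighborhood of the arc $Z_{[0,\tau]}(q)$; the hypothesis $t_{i+1}-t_i<T$ is precisely what makes such uniform flow boxes available. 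Since each such arc is contractible, over it the Linear Poincar\'e flow of $Z$ can moreover be trivialized, giving coordinates $(s,v)\in[0,\tau]\times\mathbb R^2$ in which $Z=\partial/\partial s$, the arc is $\{v=0\}$, the normal fibers at the two endpoints are identified with $\mathbb R^2$, and $P^Z_\tau(\cdot)=\mathrm{Id}$.

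Now, given $Z\in W_0(X)$, $p\in\per(Z)$, the partition and the $L_i$'s, write $q_i=Z_{t_i}(p)$ and $\tau_i=t_{i+1}-t_i$, and for each $i$ use the above coordinates on the arc from $q_i$ to $q_{i+1}$, with transversal radius $\ell\le\ell_0$ small enough that the closed flow box lies in the given tubular neighborhood $U$ and that the $n$ flow boxes are pairwise disjoint. Let $A_i$ be the matrix of $L_i$ in these coordinates; since $P^Z_{\tau_i}(q_i)=\mathrm{Id}$ and the coordinate changes are uniformly bounded, $\|A_i-\mathrm{Id}\|\le\Lambda'\epsilon$ with $\Lambda'=\Lambda'(X,T)$. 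I would then set $V_i(s,v)=\tau_i^{-1}\phi(s/\tau_i)\,\xi_i(v)$, supported in a compact subset of the box and vanishing on $\{v=0\}$, where $\phi\ge0$ is a fixed bump on $(0,1)$ with $\int_0^1\phi=1$ and $\xi_i$ is a vector field on the transversal disk equal to $(\log A_i)v$ near $0$ and cut off near the boundary. Since $P^Z_{\tau_i}=\mathrm{Id}$, the variational equation of $Z+V_i$ along $\{v=0\}$, restricted to the normal direction, reduces to $\dot d=\tau_i^{-1}\phi(s/\tau_i)(\log A_i)\,d$, so that $d(\tau_i)=\exp(\log A_i)=A_i$; hence the Linear Poincar\'e flow of $Z+V_i$ from $N^Z_{q_i}$ to $N^Z_{q_{i+1}}$ is $L_i$. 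Putting $Y=Z+\sum_iV_i$, the supports are disjoint and inside $U$, so $Y=Z$ along $O_Z(p)$ and off $U$; and since $V_j$ ($j\neq i$) is supported away from the arc from $q_i$ to $q_{i+1}$, one gets $P^Y_{t_{i+1}-t_i}(Y_{t_i}(p))=L_i$ for every $i$.

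The step I expect to be the main obstacle is the uniform estimate $\|V_i\|_{C^1}\le C(X,T)\,\|A_i-\mathrm{Id}\|$, which is what is needed to conclude $\|Y-Z\|_{C^1}=\max_i\|V_i\|_{C^1}<\rho$, hence $Y\in W(X)$, after choosing $\epsilon=\epsilon(X,T)$ small. Forcing the generator $\tau_i^{-1}\phi(s/\tau_i)\log A_i$ to integrate to $\log A_i$ over an interval of length $\tau_i$ makes its sup-norm of order $\|\log A_i\|/\tau_i$, so the bound is uniform only when the mesh $\tau_i$ is of order $T$ (that is, bounded below as well as above) and when $\ell$ is shrunk accordingly; this calibration, together with the construction of the uniform flow boxes and the elementary realization of near-identity linear maps, is precisely the delicate content of Franks's lemma \cite{F} and of its flow version in Appendix A of \cite{bgv}, which I would follow for this step.
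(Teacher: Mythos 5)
The paper does not prove this lemma; it is quoted from the literature, with references to Franks \cite{F} and to Appendix~A of \cite{bgv}, so there is no internal argument to compare against. Your sketch is a faithful reconstruction of the BGV-type proof: uniform flow boxes around the subarcs, trivialization of the Linear Poincar\'e flow over each box, a perturbation $V_i=\tau_i^{-1}\phi(s/\tau_i)\xi_i(v)$ whose variational equation along $\{v=0\}$ realizes $\exp(\log A_i)=A_i$, and disjoint supports to add the $V_i$ without accumulation. The computation giving $P^Y_{\tau_i}(q_i)=L_i$ is correct.

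The difficulty you flag at the end is, however, not a mere calibration that the references dispose of: the $C^1$ size of $V_i$ scales like $\epsilon/\tau_i$, whereas the statement as printed imposes no lower bound on the mesh $t_{i+1}-t_i$ while insisting that $\epsilon$ depend only on $T$. In that form the lemma is in fact false, for a reason independent of your particular construction. Take $M=\mathbb{T}^3$ with the constant field $\partial/\partial\theta_1$, so $P^Z_\tau=\mathrm{Id}$ for all $\tau$; pick a periodic point $p$ of period $t_p$, set $\tau_i\equiv\tau$ with $\tau$ tiny, and let $L_i=e^{\epsilon/2}\,\mathrm{Id}$. Each $L_i$ is $\epsilon$-close to $P^Z_{\tau_i}$, yet any $Y$ realizing all the $L_i$ must satisfy $P^Y_{t_p}(p)=e^{(t_p/\tau)\epsilon/2}\,\mathrm{Id}$, which leaves every bounded neighborhood of $X$ as $\tau\to 0$. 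The correct hypothesis, as in \cite{bgv} and as you anticipate, is that the step lengths be bounded both above and below, say $\tau_i\in[T_0,T]$, with $\epsilon$ depending on $T_0$ and $T$. The paper's applications always take steps $1,\dots,1,r$ with $0\le r<1$ and survive unchanged once the last two steps are merged to put the mesh in $[1,2)$; so your instinct to distrust the printed statement at precisely this point, and to defer the calibration to \cite{bgv}, was well placed, and with the two-sided bound added your argument goes through as written.
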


\begin{proof}[Proof of Theorem \ref{move-attractor}]
Denote by $2^M_c$ the set of compact subsets of the surface $M$.
Let $S:\diff\to 2^M_c$ be defined by
$$
S(X)=\cl(\sad_d(X))\cup\cl(\sink(X))
$$
It follows easily from the continuous dependence of the eigenvalues
of a periodic point with respect to $X$ that
this map is {\em lower-semicontinuous},
i.e.,
for every $X\in \diff$ and every open set $W$ with $S(X)\cap W\neq\emptyset$
there is a neighborhood $\mathcal{P}$ of $X$ such that $S(Y)\cap W$ for all $Y\in \mathcal{P}$.
From this and well-known properties of lower-semicontinuous maps
\cite{k1}, \cite{k}, we obtain
a residual subset $\mathcal{A}\subset \diff$ where
$S$ is {\em upper-semicontinuous}, i.e.,
for every $X\in \mathcal{A}$ and every compact subset $K$ satisfying $S(X)\cap K=\emptyset$ there is a neighborhood $\mathcal{D}$ of $X$
such that $S(Y)\cap K=\emptyset$ for all $Y\in\mathcal{D}$.

By the Ergodic Closing Lemma for flows (Theorem 3.9 in \cite{w} or Corollary in p.1270 of \cite{MM})
there is another residual subset $\mathcal{B}$ of three-dimensional flows $X$
such that for every ergodic measure $\mu$ of
$X$
there are sequences $Y^k\to X$ and $p_k$ (of periodic points of $Y^k$) such that
$\mu_{p_k,t_{p_k,Y^k}}^{Y^k}\to\mu$.

By the Kupka-Smale Theorem \cite{hk} there is a residual subset of Kupka-Smale three-dimensional flows $\mathcal{KS}$.

Define $\mathcal{R}_6=\mathcal{A}\cap\mathcal{B}\cap\mathcal{KS}$.
Then, $\mathcal{R}_6$ is a residual subset of three-dimensional flows.

To prove the result we only need to prove
$$
L_X\subset W^s_w(\dis(X)),
\quad\quad\forall X\in\mathcal{R}_6,
$$
where $L_X$ is the full Lebesgue measure set in Lemma \ref{ara1}.

By contradiction assume that it is not.
Then, there are $X\in\mathcal{R}_6$ and $x\in L_X$ satisfying $\omega(x)\cap \dis(X)=\emptyset$, i.e.,
$$
\omega(x)\cap\cl(\per_d(X))=\emptyset.
$$
Since $X\in\mathcal{KS}$, we have
$\dis(X)=S(X)$ from (\ref{split2}). Then,
since $S$ is upper-semicontinuous on $X\in \mathcal{A}$, we can arrange
neighborhoods $U$ of $\omega(x)$ and $W(X)$ of $X$ such that
\begin{equation}
\label{contra}
U\cap(\sad_d(Z)\cup\sink(Z))=\emptyset,
\quad\quad\forall Z\in W(X).
\end{equation}

Put $W(X)$ and $T=1$ in the Franks's Lemma for flows
to obtain $\epsilon>0$ and the neighborhood
$W_0(X)\subset W(X)$ of $X$.
Set
$$
C=\sup\{\|P^Z_t(x)\|:(Z,x,t)\in W(X)\times M\times [0,1]\}
$$
and fix $\delta>0$ such that
$$
|1-e^{-\frac{\delta}{2}}|<\frac{\epsilon}{C}.
$$

Since $M$ is compact, we have $\mathcal{M}(x,X)\neq\emptyset$ and so we can fix $\mu\in\mathcal{M}(x,X)$.
Since $x\in L_X$, we have $\int\di Xd\mu\leq 0$ by Lemma \ref{ara1}.
By the Ergodic Decomposition Theorem \cite{man}
we can assume that $\mu$ is ergodic.
Since $X\in\mathcal{B}$, there are sequences $Y^k\to X$ and $p_k$ (of periodic points of $Y^k$) such that
$\mu_{p_k,t_{p_k,Y^k}}^{Y^k}\to\mu$.
It then follows from Liouville's Formula \cite{man} that
$$
0\geq\int \di Xd\mu=\lim_{k\to\infty}\int \di Xd \mu_{p_k,t_{p_k,Y^k}}^{Y^k}=
$$
$$
\lim_{k\to\infty}\frac{1}{t_{p_k,Y^k}}\int_0^{t_{p_k,Y^k}}\di X(X_s(x))ds=
\lim_{k\to\infty}\frac{1}{t_{p_k,Y^k}}|\det P^{Y^k}_{t_{p_k,Y^k}}(p_k)|
$$
yielding
$$
\lim_{k\to\infty}\frac{1}{t_{p_k,Y^k}}|\det P^{Y^k}_{t_{p_k,Y^k}}(p_k)|\leq 0.
$$
Therefore,
since $Y^k\to X$ and $\mu$ is supported on $\omega(x)\subset U$, we can fix $k$ such that
$$
p_k\in U, \quad Y^k\in W_0(X)\quad\mbox{ and }\quad|\det P^{Y^k}_{t_{p_k,Y^k}}(p_k)|<e^{t_{p_k,Y^k}\delta}.
$$

Once we fix this $k$,
write $t_{p_k,Y^k}=n+r$ where $n\in\mathbb{N}^+$ is the integer part of $t_{p_k,Y^k}$ and $0\leq r<1$.
This induces the partition
$0=t_0<t_1<...<t_{n+1}=t_{p_k,Y^k}$ given by $t_i=i$ for $1\leq i\leq n$.
It turns out that $t_{i+1}-t_i=1$ (for $0\leq i\leq n-1$) and
$t_{n+1}-t_n=t_{p_k,Y^k}-n=r$ therefore,
$t_{i+1}-t_i\leq 1$ for $0\leq i\leq n$.

Define the linear maps
$L_i:N_{Y^k_{t_i}(p)}^{Y^k}\to N_{Y^k_{t_{i+1}}(p)}^{Y^k}$ by
$$
L_i=e^{-\frac{\delta}{2}}P^{Y^k}_{t_{i+1}-t_i}(Y^k_{t_i}(p_k)),
\quad\quad\forall 0\leq i\leq n.
$$
A direct computation shows
$$
\left\|L_i-P^{Y^k}_{t_{i+1}-t_i}(Y^k_{t_i}(p_k))\right\|\leq
|1-e^{-\frac{\delta}{2}}|C<\epsilon,
\quad\quad\forall 0\leq i\leq n.
$$
Then, by the Franks's Lemma for flows,
there exists
$Z\in W(X)$ with $Z=Y^k$ along $O_{Y^k}(p_k)$ such that
$$
P^Z_{t_{i+1}-t_i}(Z_{t_i}(p_k))=L_i,
\quad\quad\forall 0\leq i\leq n.
$$
Consequently,
$t_{p_k,Z}=t_{p_k,Y^k}$ and also
$P^Z_{t_{p_k,Z}}(p_k)=e^{-t_{p_k,Y^k}\frac{\delta}{2}}P^{Y^k}_{t_{p_k,Y^k}}(p_k)$ thus
$$|\det P^Z_{t_{p_k,Z}}(p_k)|=
e^{-t_{p_k,Y^k}\delta}
|\det P^{Y^k}_{t_{p_k,Y^k}}(p_k)|< 1.
$$
Up to a small perturbation if necessary we can assume
that $p_k$ has no eigenvalues of modulus $1$.
Then, $p_k\in \sad_d(Z)\cup\sink(Z)$
by the previous inequality which implies
$p_k\in U\cap(\sad_d(Z)\cup\sink(Z))$.
But $Z\in W(X)$ so we obtain a contradiction by (\ref{contra}) and
the result follows.
\end{proof}

We say that $x$ is a {\em dissipative presaddle} of a three-dimensional flow $X$
if there are sequences $Y_k\to X$ and $x_k\in \sad_d(X_k)$ such that
$x_k\to x$. Compare with \cite{w0}.
Denote by $\sad^*_d(X)$ the set of dissipative presaddles of $X$.

We only need the following elementary property of the set of dissipative presaddles whose
proof is a direct consequence of the definition.

\begin{lemma}
 \label{l5}
For every three-dimenional flow $Y$ and every neighborhood $U$ of $\sad_d^*(Y)$ there is a neighborhood $\mathcal{V}_Y$ of $Y$ such that
$\sad_d^*(Z)\subset U$, for every $Z\in \mathcal{V}_Y$.
\end{lemma}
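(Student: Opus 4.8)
The plan is to show that the set-valued map $Z\mapsto\sad_d^*(Z)$ is upper-semicontinuous, which is precisely the statement, by collapsing via a diagonal argument the two nested limits implicit in the definition of a dissipative presaddle. First I would reduce to the case where $U$ is open: replacing $U$ by its interior $\inte(U)$ does not change the fact that it contains $\sad_d^*(Y)$, and proving $\sad_d^*(Z)\subset\inte(U)$ for $Z$ near $Y$ gives the lemma.

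Then I would argue by contradiction. Fixing metrics $\rho$ on $\diff$ (compatible with the $C^1$ topology) and $d$ on $M$, if the conclusion failed we would obtain a sequence $Z_n\to Y$ and points $w_n\in\sad_d^*(Z_n)\setminus U$. By the definition of dissipative presaddle, for each $n$ there are flows $Y^n_k\to Z_n$ and points $x^n_k\in\sad_d(Y^n_k)$ with $x^n_k\to w_n$ as $k\to\infty$; choosing $k_n$ large I set $W_n=Y^n_{k_n}$ and $y_n=x^n_{k_n}$ so that $\rho(W_n,Z_n)<1/n$ and $d(y_n,w_n)<1/n$. The triangle inequality then gives $W_n\to Y$ in $\diff$, while $y_n\in\sad_d(W_n)$ and $d(y_n,w_n)\to 0$.

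To finish, compactness of $M$ lets me pass to a subsequence along which $w_n\to w$, hence also $y_n\to w$; since $W_n\to Y$ and $y_n\in\sad_d(W_n)$, the definition of dissipative presaddle yields $w\in\sad_d^*(Y)\subset U$. But $U$ is open and each $w_n$ belongs to the closed set $M\setminus U$, so $w=\lim w_n\in M\setminus U$, a contradiction. I do not expect a genuine obstacle here — as the text remarks, the lemma is a direct consequence of the definition — the only point needing care being the diagonal selection of $(W_n,y_n)$, which replaces the ``limit of limits'' in the definition of presaddle by a single approximating sequence, together with the routine observation that $\diff$ and the compact manifold $M$ are metrizable so that these sequential arguments are legitimate.
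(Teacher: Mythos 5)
Your proof is correct, and it is essentially the argument the paper has in mind: the authors omit the proof, remarking only that the lemma ``is a direct consequence of the definition,'' and your diagonal selection collapsing the two nested limits in the definition of a dissipative presaddle, combined with the metrizability of $\diff$ (since $M$ is compact) and of $M$, is exactly the routine way to make that remark precise.
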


\section{Lebesgue measure of the basin of hyperbolic homoclinic classes}

\noindent
This section is devoted to the proof of the following result.

\begin{theorem}
\label{fui}
There is a residual subset $\mathcal{R}_{11}$ of three-dimensional flows $Y$ 
such that if $\cl(\sad_d(Y))$ is hyperbolic, then the following properties are equivalent for
every dissipative homoclinic $H$ of $Y$:
\begin{enumerate}
\item[(a)] $m(W^s_Y(H))>0$.
\item[(b)] $H$ is an attractor of $Y$.
\end{enumerate}
\end{theorem}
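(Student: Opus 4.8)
The implication (b) $\Rightarrow$ (a) is immediate: if $H$ is an attractor, it has an open neighborhood $U$ with $H=\bigcap_{t\ge0}X_t(U)$, hence $U\subset W^s_Y(H)$ and $m(W^s_Y(H))\ge m(U)>0$. So the whole content is in (a) $\Rightarrow$ (b), and the plan is to obtain this on a suitable residual set $\mathcal R_{11}$ via $C^1$-generic arguments. The guiding idea is that a dissipative homoclinic class $H$ which is \emph{not} an attractor must fail to be Lyapunov stable; generically (by the connecting-lemma technology of Bonatti--Crovisier, Crovisier's central-model results, and the neutrality results of \cite{cmp} already invoked in the paper) this forces $W^s(H)$ to be ``thin''. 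Concretely, I would first pass to a residual set where: every homoclinic class is neutral (as in Lemma \ref{homoneutral}); the chain-recurrence classes coincide with the homoclinic classes (Bonatti--Crovisier); and the map $Y\mapsto H(Y)$ (continuation/Hausdorff-limit of the homoclinic class of the continuation of a dissipative saddle) is lower-semicontinuous where defined, so that generically it is continuous. On this set, combined with the hypothesis that $\cl(\sad_d(Y))$ is hyperbolic — so that $H$ is one of the finitely many pieces $H_i$ in the splitting \eqref{split1} and is in particular a locally maximal hyperbolic set — $H$ admits a well-defined continuation $H_Z$ for $Z$ near $Y$, varying continuously.

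The core step is the dichotomy: for $Y$ in the residual set, if $H$ is a (locally maximal, hyperbolic) dissipative homoclinic class that is not an attractor, then $m(W^s_Y(H))=0$. Here is how I would argue. Since $H$ is hyperbolic but not an attractor, $W^u(H)\not\subset H$, so there is a point $q\in W^u(H)\setminus H$; its orbit leaves a fixed isolating neighborhood of $H$. Using the shadowing/inclination lemma for the hyperbolic set $H$ together with a Franks-type or connecting-lemma perturbation (Franks's Lemma for flows, Lemma \ref{frank}, plus Hayashi-type connecting for the ambient flow), I would show that generically the ``escaping'' behavior is robust: there is an open set $V$ meeting $W^s(H)$ but with the property that a definite proportion of orbits through $V$ are eventually trapped by a \emph{different} chain class. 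More precisely, I want to show $W^s(H)\subset \overline{W^{ss}(H)}$ up to a set that the argument controls, and then use that the strong-stable lamination $W^{ss}(H)$ of a hyperbolic set which is not an attractor has zero Lebesgue measure: its leaves are two-dimensional in the $3$-manifold, it is locally the product of a $2$-disk with a compact set $H\cap\Sigma$ of zero Lebesgue measure in a cross-section $\Sigma$ (because $H$ is not attracting, $H\cap\Sigma$ is not the whole section and, being hyperbolic, is a nowhere dense locally maximal set, hence Lebesgue-null by the standard distortion/Bowen argument for basic sets that are not attractors). Thus $m(W^{ss}(H))=0$, and since $W^s(H)$ is the flow-saturation of $W^{ss}(H)$ intersected with the basin, $m(W^s(H))=0$ as well — contradicting (a). Therefore (a) forces $H$ to be an attractor.

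The step I expect to be the main obstacle is bridging from ``$H$ hyperbolic, not an attractor'' to ``$m(W^s(H))=0$'' \emph{rigorously}, because $W^s(H)$ is strictly larger than the local strong-stable lamination: points can approach $H$ after wandering far away and re-entering its isolating neighborhood infinitely often. Handling this requires a trapping/Lyapunov-function argument: one shows that the set of points whose omega-limit is contained in $H$ but which are not on the global stable manifold $W^s(H)=\bigcup_{x\in H}W^{ss}(x)$ — i.e. points that accumulate on $H$ ``from outside'' — has measure zero, typically by decomposing according to the last time the orbit enters the isolating neighborhood and summing geometric estimates, exactly in the spirit of the measure estimate $m(\Lambda_\rho(s))=1$ in Lemma \ref{ara1} but now localized near $H$. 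One must also rule out, generically, that $W^s(H)$ accumulates a positive-measure set through a ``heteroclinic chain'' to another piece; this is where neutrality (Lemma \ref{neutral}) and genericity of the chain-class $=$ homoclinic-class identification are used to force $W^s(H)=W^s_w(H)$ up to null sets and to show $W^s_w(H)$ is governed by the attractor/non-attractor alternative. Once these measure-zero statements are in place on the residual set $\mathcal R_{11}$ (intersected with $\mathcal R_2$ and a Kupka--Smale set), the equivalence (a) $\Leftrightarrow$ (b) follows. I would close by noting that $\mathcal R_{11}$ is obtained as a countable intersection over a countable basis of neighborhoods of dissipative saddles of the open-dense sets produced by each perturbation argument, so it is genuinely residual.
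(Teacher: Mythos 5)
Your reduction to ``(a) $\Rightarrow$ (b)'' is right, and your preliminary framing (passing to a residual set where the continuation $Y\mapsto H_Y$ varies continuously and where the hyperbolic homoclinic class is uniformly locally maximal) matches the paper's Lemma \ref{local}. But the core measure-zero step in your sketch has a genuine gap. You assert that the trace $H\cap\Sigma$ is ``Lebesgue-null by the standard distortion/Bowen argument for basic sets that are not attractors.'' That bounded-distortion argument needs $C^{1+\alpha}$ (in practice $C^2$) regularity; in the $C^1$ category a proper locally maximal hyperbolic set can have positive Lebesgue measure (``fat'' $C^1$ horseshoes exist, and they suspend to flows), so the claim is simply not available for an arbitrary $C^1$ flow in the residual set you are trying to build. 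This is exactly the obstruction the paper's Lemma \ref{local} is designed to get around: it introduces the trapped sets $\Lambda^N_Y=\bigcap_{0\le t\le N}Y_{-t}(U)$ and the sets $\mathcal{U}^\epsilon=\{Y:m(\Lambda^\infty_Y)<\epsilon\}$, proves \emph{denseness} of $\mathcal{U}^\epsilon$ by restricting to the dense subset of $C^2$ flows where Bowen--Ruelle (Corollary 5.7 of \cite{br}) gives $m(\Lambda^\infty_Y)=0$ when $H_Y$ is not an attractor, proves \emph{openness} of $\mathcal{U}^\epsilon$ by semicontinuity of the finite-time trapped sets $\Lambda^N_Y$, and then takes $\mathcal{L}_{X,H}=\bigcap_k\mathcal{U}^{1/k}$. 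Some such ``$C^2$-dense plus open'' device is indispensable; a direct $C^1$ distortion estimate will not close the argument.

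Your own worry about the ``re-entry'' issue is also resolved much more simply than you suggest, and you should not reach for connecting lemmas, chain classes, or Hayashi-type perturbations here (none of these appear in the paper's proof of Theorem \ref{fui}). Once $m(\Lambda^\infty_Y)=0$ is known, one writes
$W^s_Y(H_Y)=\bigcup_{n\ge 0}Y_{-n}(\Lambda^\infty_Y)$,
which holds because $H_Y$ is the maximal invariant set inside the compact isolating block $U$, so $\omega(y)\subset H_Y$ if and only if the forward orbit of $y$ eventually remains in $U$. Since each $Y_{-n}$ is a $C^1$ diffeomorphism and a countable union of null sets is null, $m(W^s_Y(H_Y))=0$. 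That one identity replaces your geometric-lamination and ``flow-saturation'' heuristics, and it eliminates the re-entry problem entirely. Finally, the paper assembles $\mathcal{R}_{11}$ not as a naive countable intersection over neighborhoods of saddles, but by the standard patching of Proposition 2.6 in \cite{m}: combine the residual set where $Y\mapsto\cl(\sad_d(Y))$ is upper semicontinuous with the residual set of Lemma \ref{local}, cover the ``hyperbolic'' stratum $\mathcal{R}\setminus A$ by the open sets $\mathcal{O}_X$, and union the locally residual sets $\mathcal{R}_X$ with the closed complement $A$.
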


For this we need the lemma below.
Given a homoclinic class $H=H_X(p)$ of a three-dimensional flow $X$ we denote by
$H_Y=H_Y(p_Y)$ the continuation of $H$, where $p_Y$ is the analytic continuation of $p$
for $Y$ close to $X$ (c.f. \cite{pt}).

\begin{lemma}
\label{local}
There is a residual subset $\mathcal{R}_{12}$ of three-dimensional flows $X$
such that for every hyperbolic homoclinic class $H$
there are an open neighborhood $\mathcal{O}_{X,H}$ of $f$ and a residual subset
$\mathcal{R}_{X,H}$ of $\mathcal{O}_{X,H}$ where the following properties are equivalent:
\begin{enumerate}
\item
$m(W^s_Y(H_Y))=0$ for every $Y\in\mathcal{R}_{X,H}$.
\item
$H$ is not an attractor.
\end{enumerate}
\end{lemma}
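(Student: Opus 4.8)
The plan is to prove the lemma by a dichotomy according to whether $H$ is an attractor of $X$, the non‑attractor case carrying all the work. Fix a hyperbolic homoclinic class $H=H_X(p)$. Since a hyperbolic homoclinic class has local product structure it is locally maximal, so there are a compact isolating block $\overline{U}$ with $H=\bigcap_{t\in\mathbb{R}}X_t(\overline{U})\subset\operatorname{int}\overline{U}$ and a $C^1$‑neighborhood $\mathcal{O}$ of $X$ such that for each $Y\in\mathcal{O}$ the set $H_Y:=\bigcap_{t\in\mathbb{R}}Y_t(\overline{U})$ is a hyperbolic homoclinic class (the continuation $H_Y(p_Y)$), with $\dim E^s=\dim E^u=1$ along it, and $H_Y\subset\operatorname{int}\overline{U}$. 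An isolated transitive hyperbolic set is an attractor iff it is Lyapunov stable, and Lyapunov stability is an open condition on isolated hyperbolic sets; hence, shrinking $\mathcal{O}$, exactly one of the following holds, and which one depends only on whether $H$ is an attractor of $X$: (A) $H_Y$ is a hyperbolic attractor for every $Y\in\mathcal{O}$; or (B) $H_Y$ is not an attractor for any $Y\in\mathcal{O}$. In case (A) I would put $\mathcal{O}_{X,H}=\mathcal{O}$ and let $\mathcal{R}_{X,H}$ be any residual subset of $\mathcal{O}$: a hyperbolic attractor has a trapping region, so $W^s_Y(H_Y)$ has nonempty interior and $m(W^s_Y(H_Y))>0$ for all $Y\in\mathcal{O}$; thus (1) is false and (2) is false, so they are equivalent.

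Now case (B). I first reduce to a statement about the local stable set. Since $H_Y\subset\operatorname{int}\overline{U}$ and $\overline{U}$ is isolating, a routine argument gives $W^s_Y(H_Y)=\bigcup_{n\in\mathbb{N}}Y_{-n}(\Phi(Y))$, where $\Phi(Y):=\{x\in M:\ Y_t(x)\in\overline{U}\ \text{for all}\ t\ge0\}$ is the closed local stable set of $H_Y$; as each $Y_{-n}$ preserves Lebesgue null sets, $m(W^s_Y(H_Y))=0$ iff $m(\Phi(Y))=0$. I would set $\mathcal{O}_{X,H}=\mathcal{O}$ and $\mathcal{R}_{X,H}:=\{Y\in\mathcal{O}:\ m(\Phi(Y))=0\}$, so that (1) holds by definition and (2) holds by the standing assumption of case (B); it then only remains to see that $\mathcal{R}_{X,H}$ is residual in $\mathcal{O}$. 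The sets $\Phi(Y)$ vary upper semicontinuously in $Y$ (if $x_k\to x$, $Y^k\to Y$ and $x_k\in\Phi(Y^k)$, then $Y_t(x)\in\overline{U}$ for all $t\ge0$, by joint continuity of the flow), so $Y\mapsto m(\Phi(Y))$ is upper semicontinuous and $\mathcal{R}_{X,H}=\bigcap_{k\ge1}\{Y\in\mathcal{O}:m(\Phi(Y))<1/k\}$ is a $G_\delta$. Hence everything reduces to proving that $\{Y\in\mathcal{O}:m(\Phi(Y))=0\}$ is dense in $\mathcal{O}$.

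For the density I would argue as follows. Fix $Y_0\in\mathcal{O}$ and a cross‑section $S$ transverse to $Y_0$ through a point of $H_{Y_0}$; by local maximality and the local product structure of the hyperbolic set $\Lambda_S:=H_{Y_0}\cap S$ of the Poincaré return map (with one‑dimensional stable and unstable directions in $S$), $\Phi(Y_0)$ is, up to saturation by the flow, the local stable set of $\Lambda_S$, which is the product of the stable arcs with the transverse unstable slice $C^u:=\Lambda_S\cap W^u_{\mathrm{loc}}$, a compact subset of a one‑dimensional transversal $I$. A Fubini argument then gives $m(\Phi(Y_0))>0$ iff $m_1(C^u)>0$, and $m_1(C^u)=\lim_n m_1(g_u^{-n}(I))$ (a decreasing limit), where $g_u$ is the one‑dimensional $C^1$ expanding map induced on $I$ by the return map. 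I would now take a $C^1$‑small perturbation $Y$ of $Y_0$, supported in a small neighborhood of $H_{Y_0}$ and — in the spirit of Franks's Lemma~\ref{frank} — adding a small outward push in the unstable direction, so that $H_{Y_0}$ persists to a hyperbolic, non‑attracting homoclinic class $H_Y$ whose unstable expansion rate is increased by a definite amount, uniformly along $H_Y$. Then every inverse branch of the induced map $g_{u,Y}$ contracts, per return, by a definite factor larger than that of $g_{u,Y_0}$, and since one‑dimensional $C^1$ expanding maps have sub‑exponential distortion on cylinders, this forces $m_1(g_{u,Y}^{-n}(I))\to0$ exponentially, hence $m_1(C^u_Y)=0$ and $m(\Phi(Y))=0$. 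This proves density, so $\mathcal{R}_{X,H}$ is residual in $\mathcal{O}_{X,H}$, completing case (B).

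Finally one takes $\mathcal{R}_{12}=\diff$ (the construction above is valid for every $X$ and every hyperbolic homoclinic class of $X$; one may intersect with the Kupka–Smale flows if a more standard residual set is preferred), doing the construction separately for each hyperbolic homoclinic class. The main obstacle is precisely the density step in case (B): translating "$m(W^s_Y(H_Y))>0$" into the one‑dimensional statement "$m_1(C^u_Y)>0$" — which requires handling the Lebesgue measure of the stable lamination of a hyperbolic set with some care in the $C^1$ category — and then producing a $C^1$‑small perturbation that actually drives the transverse unstable Cantor set to zero length. What makes a small perturbation suffice is that a uniform increase of the unstable expansion rate compounds over the infinitely many returns, so the extra contraction of the inverse branches is unbounded even though the per‑step gain is only of the order of the size of the perturbation.
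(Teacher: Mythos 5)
Your overall skeleton coincides with the paper's: the dichotomy on whether $H$ is an attractor, the reduction to the local stable set $\Phi(Y)=\Lambda^\infty_Y=\bigcap_{t\ge0}Y_{-t}(\overline U)$, the upper semicontinuity of $Y\mapsto m(\Phi(Y))$ giving the $G_\delta$ property, and the observation that everything reduces to density of $\{Y:m(\Phi(Y))=0\}$. But the density step — which you yourself flag as ``the main obstacle'' — is where you diverge from the paper and where your argument does not actually close.

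Your density argument rests on three steps that all fail in the $C^1$ category. First, the Fubini reduction $m(\Phi(Y_0))>0 \Leftrightarrow m_1(C^u)>0$ requires absolute continuity of the stable holonomy of the hyperbolic set, which is a $C^{1+\alpha}$ theorem; for merely $C^1$ hyperbolic systems the stable lamination can fail to be absolutely continuous, so the product/Fubini reduction is unavailable. Second, the assertion that a uniform increase of the expansion rate forces $m_1(g_{u,Y}^{-n}(I))\to0$ relies on a bounded (or controllably growing) distortion estimate for the inverse branches; $C^1$ expanding maps do not enjoy bounded distortion, and ``sub-exponential distortion on cylinders'' is not a theorem you can invoke here — it is exactly the sort of control that $C^1$ regularity does not give you. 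Third, the perturbation you want — a uniform outward push in the unstable direction along the entire hyperbolic set $H_{Y_0}$, preserving hyperbolicity and the isolating block — is not a Franks-type perturbation; Franks's Lemma (Lemma~\ref{frank}) perturbs the linear Poincar\'e flow along a single periodic orbit, not along a Cantor set of orbits, and you give no mechanism for realizing such a global perturbation by a $C^1$-small change of the vector field. The paper sidesteps all three problems at once with a single move you do not take: it chooses as the dense set the $C^2$ flows in $\mathcal{O}_{X,H}$ and invokes Bowen--Ruelle (Corollary 5.7 of \cite{br}), which says that for a $C^2$ flow a non-attracting locally maximal hyperbolic set has Lebesgue-null local stable set. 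Density then comes for free from denseness of $C^2$ in $C^1$, with no perturbation and no distortion estimate needed.

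A secondary gap: you set $\mathcal{R}_{12}=\diff$ and then silently identify the locally maximal continuation $\bigcap_{t}Y_t(\overline U)$ with the homoclinic class continuation $H_Y(p_Y)$. Lower semicontinuity alone does not give this; the homoclinic class $H_Y(p_Y)$ could a priori be strictly larger than $\bigcap_t Y_t(\overline U)$, in which case $m\bigl(W^s_Y(\bigcap_t Y_t(\overline U))\bigr)=0$ does not imply $m(W^s_Y(H_Y(p_Y)))=0$. The paper uses Abdenur's generic continuity of $Y\mapsto H_Y$ precisely to force $H_Y\subset U$ and then, via transitivity, $H_Y=\bigcap_t Y_t(\overline U)$; some genericity of this kind (or an equivalent argument) is needed, so $\mathcal{R}_{12}$ cannot simply be all of $\diff$.
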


\begin{proof}
As in Theorem 4 of \cite{a}, there is a residual subset $\mathcal{R}_{12}$ of three-dimensional flows
$X$ such that, for every homoclinic class $H$ of $X$, the map $Y\mapsto H_Y$
varies continuously at $X$.

Now, let $H$ be a hyperbolic homoclinic class of some $X\in\mathcal{R}_{12}$.
Since $H$ is hyperbolic, we have that $H$ has the local product structure.
From this and the flow version of Proposition 8.22 in \cite{sh} we have that $H$ is {\em uniformly locally maximal}, i.e.,
there are a compact neighborhood $U$ of $H$ 
and a neighborhood $\mathcal{O}_{X,H}$ of $X$
such that
\begin{enumerate}
\item[(a)]
$H=\displaystyle\bigcap_{t\in\mathbb{R}}X_t(U).$
\item[(b)]
$H$ is topologically equivalent to $\displaystyle\bigcap_{t\in\mathbb{R}}Y_t(U)$,
$\forall Y\in \mathcal{O}_{X,H}$.
\end{enumerate}

But the map $Y\mapsto H_Y$ varies continuously at $X$.
From this we can assume up to shrinking $\mathcal{O}_{X,H}$ if necessary that
$H_Y\subset U$, and so,
$H_Y\subset \bigcap_{t\in\mathbb{R}}Y_t(U)$,
for every $Y\in\mathcal{O}_{X,H}$.
Now, we use the equivalence in (b) above and the well-known transitivity of the homoclinic classes
to conclude that $\bigcap_{t\in\mathbb{R}}Y_t(U)$
is a transitive set of $Y$.
From this we get easily that $H_Y= \bigcap_{t\in\mathbb{R}}Y_t(U)$.
We conclude that
\begin{enumerate}
\item[(c)]
$H_Y=\displaystyle\bigcap_{t\in\mathbb{R}}Y_t(U)$ is hyperbolic and topologically equivalent to $H$,
$\forall Y\in\mathcal{O}_{X,H}.$
\end{enumerate}

We claim that if $H$ is not an attractor, then there is a residual subset
$\mathcal{L}_{X,H}$ of $\mathcal{O}_{X,H}$ such that
\begin{equation}
\label{prevent}
m(W^s_Y(H_Y))=0,\quad\quad\forall Y\in \mathcal{L}_{X,H}.
\end{equation}
Indeed, define
$$
\Lambda^N_Y=\displaystyle\bigcap_{0\leq t\leq N}Y_{-t}(U),
\quad\quad\forall (Y,N)\in\mathcal{O}_{X,H}\times (\mathbb{N}\cup\{\infty\}),
$$
and
$$
\mathcal{U}^\epsilon=\{Y\in\mathcal{O}_{X,H}:m(\Lambda^\infty_Y)<\epsilon\},
\quad\quad\forall \epsilon>0.
$$

We assert that $\mathcal{U}^\epsilon$ is open and dense in $\mathcal{O}_{X,H}$, $\forall \epsilon>0$.
To prove it we use an argument from \cite{aapp}.

\vspace{5pt}

For the openess, take $\epsilon>0$ and $Y\in \mathcal{U}^\epsilon$.
It follows from the definitions that
there is $N$ large such that
$m(\Lambda_Y^N)<\epsilon$.
Set $\epsilon_1=\epsilon-m(\Lambda_Y^N)$ thus $\epsilon_1>0$.
Choose $\delta>0$ such that
$$
m(B_\delta(\Lambda_Y^N)\setminus \Lambda_Y^N)<\frac{\epsilon_1}{2}.
$$
(where $B_\delta(\cdot)$ denotes the $\delta$-ball operation).
Since $N$ is fixed we can select a neighborhood $\mathcal{U}_Y\subset \mathcal{O}_{X,H}$ of
$Y$ such that
$$
\Lambda_Z^N\subset B_\delta(\Lambda_Y^N),
\quad\quad \forall Z\in \mathcal{U}_Y.
$$
Therefore,
$$
m(\Lambda_Z^\infty)\leq m(\Lambda_Z^N)\leq m(B_\delta(\Lambda_Y^N))\leq
m(\Lambda_Y^N)+\frac{\epsilon_1}{2}
\leq\frac{m(\Lambda_Y^N)+\epsilon}{2}<\epsilon\quad\quad \forall Z\in \mathcal{U}_Y,
$$
yielding the openess of $\mathcal{U}^\epsilon$.

\vspace{5pt}

For the denseness, take
$\mathcal{D}$ as the $C^2$ flows in $\mathcal{O}_{X,H}$.
Clearly $\mathcal{D}$ is dense in $\mathcal{O}_{X,H}$.
Since $H$ is not an attractor and conjugated to $H_Y$, we have that $H_Y$ is not
an attractor too, $\forall Y\in\mathcal{O}_{X,H}$.
In particular, no $Y\in \mathcal{O}_{X,H}$ has an attractor
in $U$.
Apply Corollary 5.7 in \cite{br} we conclude that
$$
m(\Lambda^\infty_Y)=0, \quad\quad \forall Y\in \mathcal{D}.
$$
From this we have $\mathcal{D}\subset \mathcal{U}^\epsilon$, $\forall \epsilon>0$.
As $\mathcal{D}$ is dense in $\mathcal{O}_{Y,H}$, we are done.

\vspace{5pt}

It follows from the assertion that the intersection
$$
\mathcal{L}_{X,H}=\displaystyle\bigcap_{k\in\mathbb{N}^+}\mathcal{U}^{\frac{1}{k}}
$$
is residual in $\mathcal{O}_{X,H}$. Moreover,
$$
m(\Lambda^\infty_Y)=0, \quad\quad\forall Y\in \mathcal{L}_{X,h}.
$$
Since every $Y\in \mathcal{L}_{X,H}$ is $C^1$ we also obtain
$$
m\left(\displaystyle\bigcup_{n=0}^\infty Y_{-n}(\Lambda^\infty_Y)\right)=0, \quad\quad\forall Y\in \mathcal{L}_{X,h}.
$$
But clearly $W^s_Y(H_Y)= \bigcup_{n=0}^\infty Y_{-n}(\Lambda^\infty_Y)$ so (\ref{prevent}) holds
and the claim follows.

Now, we define
$$
\mathcal{R}_{X,H} = \left\{
\begin{array}{rcl}
\mathcal{L}_{X,H},& \mbox{if }  H\mbox{ is not an attractor}\\
\mathcal{O}_{X,H},& \mbox{otherwise}
\end{array}
\right.
$$

Suppose that $m(W^s_Y(H_Y))=0$ for every $Y\in\mathcal{R}_{X,H}$.
If $H$ were an attractor, then $H_Y$ also is by equivalence thus
$m(W^s_Y(H_Y))>0$, $\forall Y\in\mathcal{O}_{X,H}$, yielding a contradiction. Therefore, $H$ cannot be an attractor.

If, conversely, $H$ is not an attractor, then $\mathcal{R}_{X,H}=\mathcal{L}_{X,H}$ and so
$m(W^s_Y(H_Y))=0$ for every $Y\in\mathcal{R}_{X,H}$ by (\ref{prevent}).
This completes the proof.
\end{proof}

\begin{proof}[Proof of Theorem \ref{fui}]
Let  $\mathcal{R}_{12}$ be as in Lemma \ref{local}.
Define the map $S:\diff\to 2^M_c$ by
$S(X)=\cl(\sad_d(X))$.
This map is clearly lower-semicontinuous, and so, upper semicontinuous in a residual subset
$\mathcal{A}$.

Define $\mathcal{R}= \mathcal{R}_{12}\cap \mathcal{A}$.
Clearly $\mathcal{R}$ is a residual subset of three-dimensional flows.

Define
$$
A=\{f\in \mathcal{R}:\cl(\sad_d(X))\mbox{ is not hyperbolic}\}.
$$

Fix $X\in\mathcal{R}\setminus A$.
Then, $\cl(\sad_d(X))$ is hyperbolic and so
there are finitely many disjoint dissipative homoclinic classes $H^1,\cdots, H^{r_X}$
(all hyperbolic) satisfying
$$
\cl(\sad_d(X))=\displaystyle\bigcup_{i=1}^{r_X}H^i.
$$
As $X\in \mathcal{R}\subset\mathcal{R}_{12}$, we can consider for each $1\leq i\leq r_X$ the neighborhood
$\mathcal{O}_{X, H^i}$ of $X$ as well as its residual subset $\mathcal{R}_{X,H^i}$ given by Lemma \ref{local}.

Define,
$$
\mathcal{O}_X=\displaystyle\bigcap_{i=1}^{r_X}\mathcal{O}_{X,H^i}
\quad \mbox{ and }\quad\mathcal{R}_X=\displaystyle\bigcap_{ì=1}^{r_X}\mathcal{R}_{X,H^i}.
$$

Recalling (c) in the proof of Lemma \ref{local}, we obtain for each $1\leq i\leq r_X$ a compact neighborhood $U_{X,H^i}$ of $H^i$ such that
$$
H^i_Y=\displaystyle\bigcap_{t\in\mathcal{R}}Y_t(U_{X,H^i})\quad\mbox{ is hyperbolic and equivalent to }H^i,
\quad\quad\forall Y\in \mathcal{O}_{Y,H^i}.
$$
As $X\in\mathcal{A}$, $S$ is upper semicontinuous at $X$. So, we can further assume that
$$
\cl(\sad_d(Y))\subset \displaystyle\bigcup_{i=1}^{r_X} U_{X,H^i},
\quad\quad\forall Y\in\mathcal{O}_{X}.
$$
This easily implies
\begin{equation}
\label{tolo}
\cl(\sad_d(Y))=\displaystyle\bigcup_{i=1}^{r_X}H^i_Y,
\quad\quad\forall Y\in\mathcal{O}_{X}.
\end{equation}

Define
$$
\mathcal{O}_{12}=\displaystyle\bigcup_{X\in \mathcal{R}\setminus A}\mathcal{O}_X
\quad\mbox{ and }
\quad
\mathcal{R}'_{12}=\displaystyle\bigcup_{X\in \mathcal{R}\setminus A}\mathcal{R}_X.
$$
We have that $\mathcal{O}_{12}$ is open and $\mathcal{R}_{12}$ is residual
in $\mathcal{O}_{12}$.

Finally we define
$$
\mathcal{R}_{11}=A\cup \mathcal{R}'_{12}.
$$
Since $\mathcal{R}$ is a residual subset of three-dimensional flows, we conclude from Proposition 2.6 in \cite{m}
that $\mathcal{R}_{11}$ also is.

Now, take a $Y\in\mathcal{R}_{11}$ such that $\cl(\sad_d(Y))$ is hyperbolic
and let $H$ be a dissipative homoclinic class of $Y$.
Then,
$H\subset \cl(\sad_d(Y))$ from Birkhoff-Smale's Theorem \cite{hk}.
Since $\cl(\sad_d(Y))$ is hyperbolic, we have $Y\notin A$ so $Y\in \mathcal{R}'_{12}$
thus $Y\in\mathcal{R}_X$ for some $X\in\mathcal{R}\setminus A$.
As $\mathcal{R}_X\subset \mathcal{O}_X$, we obtain $Y\in\mathcal{O}_X$ thus (\ref{tolo}) implies
$H=H^i_Y$ for some $1\leq i\leq r_X$.

If $m(W^{s}_Y(H))>0$, then
$m(W^s_Y(H_Y^i))>0$. But $Y\in\mathcal{R}_X$ so
$Y\in \mathcal{R}_{X,H^i}$. As $f\in \mathcal{R}_{12}$, we conclude from Lemma \ref{local}
that $H^i$ is an attractor. But $H^i$ and $H=H^i_Y$ are equivalent (\ref{tolo}), so, $H^i_Y$ is an attractor too and we are done.
\end{proof}

\section{Hyperbolicity of the dissipative presaddle set}

\noindent
In this section we shall prove the following result.
Hereafter $\card(\sink(X))$ denotes the cardinality of the set of {\em different} orbits of a three-dimensional flow
$X$ on $\sink(X)$.

\begin{theorem}
\label{peo}
There is a residual subset of three-dimensional flows $\mathcal{Q}$
such that if $X\in \mathcal{Q}$ and $\card(\sink(X))<\infty$, then $\sad_d^*(X)$ is hyperbolic. 
\end{theorem}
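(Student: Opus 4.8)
The plan is to argue by contradiction: I will show that if $\sad_d^*(X)$ is not hyperbolic for a suitably generic $X$, then arbitrarily many sinks can be created by $C^1$ small perturbations, which is incompatible with $\card(\sink(X))<\infty$. Set $\mathcal{Q}=\mathcal{KS}\cap\mathcal{A}\cap\mathcal{B}\cap\mathcal{G}$, where $\mathcal{KS}$ is the residual set of Kupka--Smale flows; $\mathcal{A}$ is the residual set on which the lower--semicontinuous map $X\mapsto\cl(\sad_d(X))\cup\cl(\sink(X))$ is continuous (equivalently, upper--semicontinuous), as in the proof of Theorem \ref{move-attractor}; $\mathcal{B}$ is the residual set where the Ergodic Closing Lemma for flows holds; and $\mathcal{G}=\bigcap_{n\in\mathbb{N}}\mathcal{G}_n$ with $\mathcal{G}_n$ the union of the open set $\{Z:\card(\sink(Z))\geq n\}$ with the interior of its complement --- so that an $X\in\mathcal{G}$ with $\card(\sink(X))=N<\infty$ has a $C^1$ neighbourhood in which every flow has at most $N$ sinks.

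Fix such an $X$. The first step is to see that $\sad_d^*(X)=\cl(\sad_d(X))=:\Sigma$: the inclusion $\supset$ is clear since $\sad_d(X)\subset\sad_d^*(X)$ and the latter is closed, and for $\subset$ one writes $x=\lim x_k$ with $x_k\in\sad_d(Y_k)$, $Y_k\to X$; upper--semicontinuity at $X\in\mathcal{A}$ gives $x\in\cl(\sad_d(X))\cup\cl(\sink(X))$, and $\cl(\sink(X))=\sink(X)$ is a finite union of hyperbolic periodic orbits, none of which can contain $x$ because a hyperbolic sink orbit has a neighbourhood free of other periodic orbits of any nearby flow, contradicting $x_k\to x$, $x_k\in\sad_d(Y_k)$. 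Thus $\Sigma$ is compact, invariant, nonsingular, and $\sad_d(X)$ is dense in it.

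Now suppose $\Sigma$ is not hyperbolic. As $X$ is nonsingular, this is equivalent to non--hyperbolicity of the Linear Poincar\'e flow $P^X$ on the two--dimensional normal bundle $N_\Sigma$, and I proceed by a dichotomy on dominated splittings of index one (the only possible index, since $\dim N=2$). If $P^X|_\Sigma$ has no dominated splitting, then the flow version of Ma\~n\'e's argument --- Franks's Lemma for flows (Lemma \ref{frank}) combined with the rotation technique of \cite{bgv} --- yields, for each $n$, a flow $Y$ arbitrarily $C^1$ close to $X$ with $n$ distinct periodic orbits, obtained by perturbing along dissipative saddles of $X$ (elements of the dense set $\sad_d(X)$) with arbitrarily weak finite--time domination, whose Linear Poincar\'e return maps have eigenvalues of equal modulus; that modulus is $<1$ by dissipativeness, so these orbits are sinks of $Y$. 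If instead $P^X|_\Sigma$ carries a dominated splitting $N_\Sigma=E\oplus F$ with $\dim E=\dim F=1$, then $\Sigma$ non--hyperbolic means $E$ is not uniformly contracted or $F$ is not uniformly expanded; by the standard fact that domination together with uniform hyperbolicity of a bundle over a dense set of periodic orbits forces uniform hyperbolicity over the whole set, there are infinitely many distinct dissipative saddles $O(p_k)\subset\Sigma$ along which the stable, resp.\ unstable, rate of $P^X$ tends to $1$; in either case $\log\|P^X_{t_{p_k}}(p_k)|_F\|=o(t_{p_k})$ (using $|\det P^X_{t_{p_k}}(p_k)|<1$ in the former). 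Perturbing along $O(p_k)$ by Franks's Lemma exactly as in the proof of Theorem \ref{move-attractor} --- multiplying $P^X$ by $e^{-\delta}$ on each step of a unit partition of the period --- turns the return map into $e^{-\delta t_{p_k}}P^X_{t_{p_k}}(p_k)$, both of whose eigenvalues have modulus $<1$ once $\delta\,t_{p_k}>\log\|P^X_{t_{p_k}}(p_k)|_F\|$, i.e.\ for $t_{p_k}$ large; hence $p_k$ becomes a sink of a $C^1$ small perturbation. Performing $n$ of these perturbations in pairwise disjoint tubular neighbourhoods again produces $Y$ arbitrarily $C^1$ close to $X$ with at least $n$ sinks.

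In both branches, taking $n=N+1$ contradicts $X\in\mathcal{G}$; therefore $\sad_d^*(X)$ is hyperbolic. I expect the crux to be the perturbative heart of the first branch together with the Ma\~n\'e--type input of the second: converting a whole family of weakly dominated (resp.\ weakly hyperbolic) dissipative saddles into sinks by a single $C^1$ small perturbation, keeping the pieces localized in disjoint tubes and within the one threshold $\epsilon$ produced by Franks's Lemma for flows. This is precisely where three--dimensionality is used --- the normal bundle is two--dimensional, so absence of an index--one dominated splitting is the only obstruction to a splitting --- and where dissipativeness is used --- homogenizing eigenvalues produces sinks rather than sources.
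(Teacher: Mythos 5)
Your plan has the right general shape---reduce to the dichotomy on a dominated splitting for the Linear Poincar\'e flow over $\sad_d^*(X)$, and argue that failure of hyperbolicity permits the creation of extra sinks---but both branches conceal the two nontrivial inputs of the paper, and the second branch has a genuine gap.

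The first branch (no dominated splitting $\Rightarrow$ arbitrarily many sinks can be created) is exactly Proposition~\ref{ladilla2}, which the paper devotes Section~\ref{secap} to proving via Lemmas~\ref{forcaunf} and~\ref{angle} and a long Franks-type perturbation. It is not a direct consequence of~\cite{bgv}; you may quote Proposition~\ref{ladilla2}, but you should recognize it as one of the main technical results being established rather than ``the flow version of Ma\~n\'e's argument.''

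The second branch is where the real gap lies. You invoke a ``standard fact that domination together with uniform hyperbolicity of a bundle over a dense set of periodic orbits forces uniform hyperbolicity over the whole set.'' This is not standard and, as stated, false: the closure of the dissipative saddles may accumulate on a normally contracting irrational torus, which has a dominated splitting, carries no periodic orbits of its own, and is not hyperbolic---yet periodic orbits are still dense in the ambient compact set. What is actually true, and what the paper uses, is Theorem~B of Arroyo--Rodriguez Hertz~\cite{ar}: for a $C^2$ nonsingular three-dimensional flow, a compact invariant set with a dominated splitting for the Linear Poincar\'e flow and all periodic points saddles is the union of a hyperbolic set and finitely many normally contracting or expanding irrational tori. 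The $C^2$ hypothesis is essential (it is a distortion argument of Pujals--Sambarino~\cite{PS} type). Because of it, the paper does not attempt to prove hyperbolicity directly for the $C^1$-generic $X$; it proves it on a dense subset of $C^2$ Kupka--Smale flows without irrational tori inside $\mathcal{O}_X\subset S(M)$, and then spreads the conclusion to an open dense subset by robustness of hyperbolicity and Lemma~\ref{l5}. Your ergodic-closing-lemma substitute does not fill the gap: it yields weak periodic orbits of nearby flows approximating a nonhyperbolic ergodic measure, but those orbits need not be dissipative, and without $|\det|<1$ the multiplication by $e^{-\delta}$ along a unit partition need not turn them into sinks while remaining a $C^1$-small perturbation. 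You should replace the second branch by the citation to Theorem~B of~\cite{ar} together with the $C^2$-density and robustness scheme of the paper.
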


The proof is based on the auxiliary definition below.

\begin{defi}
We say that a three-dimensional flow $X$ has {\em finitely many sinks robustly} if $\card(X)<\infty$ and, moreover,
there is a neighbourhood $\mathcal{U}_X$ of $X$ such that $\card(\Sink(Y))=\card(\Sink(X))$ for every $Y\in \mathcal{U}_X$. We denote by $S(M)$
the set of three-dimensional flows with this property. 
\end{defi}

Recall that a compact invariant set $\Lambda$ {\em has a dominated splitting} if
there exist a continuous tangent bundle decomposition $N_\Lambda=E_\Lambda\oplus F_\Lambda$
and $T>0$ such that
$$
\left\|P^X_{T}(p)/E_p\right\|\left\|P^X_{-T}(X_T(p))/F_{X_T(p)}\right\|\leq\frac{1}{2},
\quad\quad\forall p\in\Lambda.
$$

The proof of the following result is postponed to Section \ref{secap}.

\begin{proposition}
\label{ladilla2}
If $X\in S(M)$, then
$\sad^*_d(X)$ has a dominated splitting. 
\end{proposition}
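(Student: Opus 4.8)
The plan is to derive the dominated splitting from a contradiction argument combined with Franks's Lemma for flows (Lemma \ref{frank}), in the spirit of the Mañé/Bonatti--Gourmelon--Vivier techniques for the Linear Poincar\'e flow. First I would recall that $\sad_d^*(X)=\cl\big(\bigcup_{Y\to X}\sad_d(Y)\big)$ is a compact invariant set on which the Linear Poincar\'e flow $P^X_t$ acts on the $2$-dimensional normal bundle $N$. Over each dissipative saddle $p\in\sad_d(Y)$ with $Y$ close to $X$ there is already a hyperbolic (hence dominated) splitting $N^Y_{O_Y(p)}=E^s\oplus E^u$ for $P^Y$, with $\det P^Y_{t_{p,Y}}(p)$ of modulus less than $1$; the issue is to get a \emph{uniform} domination constant $T$ that passes to the limit set $\sad_d^*(X)$ for the original flow $X$. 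The robustness hypothesis $X\in S(M)$ — that the number of sinks stays constant in a neighborhood $\mathcal{U}_X$ — is exactly what prevents a saddle from being turned into a sink by a small perturbation, so it will be used to forbid the ``bad'' perturbation that kills domination.

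The key steps, in order, would be: (1) Suppose $\sad_d^*(X)$ has \emph{no} dominated splitting. Then for every $T>0$ there is a point $x_T\in\sad_d^*(X)$, and hence (by density and an approximation argument) a nearby $Y_T\to X$ and a dissipative saddle $p_T\in\sad_d(Y_T)$ whose orbit has a long segment along which the would-be splitting $E^s\oplus E^u$ fails the $T$-domination inequality, i.e. the expansion of $E^u$ does not dominate that of $E^s$ at scale $T$. (2) Apply Franks's Lemma for flows along the periodic orbit $O_{Y_T}(p_T)$, using a partition of step $\le 1$, to realize small rotations/shears of the linear Poincar\'e cocycle on those segments where domination fails; the classical lemma (Mañé's argument, see Appendix A of \cite{bgv}) says that lack of domination along a long orbit segment lets one compose these small perturbations to collapse the two Lyapunov directions and make the resulting periodic orbit of some $Z\in\mathcal{U}_X$ have complex (rotating) or equal-modulus eigenvalues. (3) Because $|\det P^{Y_T}_{t_{p_T,Y_T}}(p_T)|<1$ (dissipativity is an open condition and is preserved by the $C^0$-small Franks perturbation, since the determinant along the orbit changes little), the eigenvalues of $p_T$ for $Z$ now both have modulus $<1$: we have produced an \emph{extra} sink for $Z\in\mathcal{U}_X$ not present for $X$ (the perturbation is supported in an arbitrarily small tubular neighborhood of a single orbit, so it creates a new periodic attractor while leaving all the others). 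This contradicts $X\in S(M)$, namely $\card(\Sink(Z))=\card(\Sink(X))$.

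The main obstacle I anticipate is step (2)–(3): one has to be careful that the Franks perturbation turning the saddle into a sink is genuinely creating a \emph{new} sink orbit rather than merely relocating an existing one, and that it does not simultaneously destroy one of the old sinks. This is handled by keeping the perturbation $C^1$-small and supported in a small tubular neighborhood $U$ of $O_{Y_T}(p_T)$ disjoint from the (finitely many) sink orbits of $X$ — possible because $p_T$ can be chosen as a genuine saddle, hence not itself a sink, and sinks are isolated. A secondary technical point is the passage from the non-domination of $\sad_d^*(X)$ (a set defined by a limit of flows) to non-domination along an actual periodic orbit of a single nearby flow: here one uses that domination with a fixed constant $T$ is a closed condition, together with Lemma \ref{l5} on semicontinuity of $\sad_d^*$, to transfer the failure of $T$-domination to $\sad_d(Y_T)$ for $Y_T$ arbitrarily $C^1$-close to $X$, and then to a long segment of a single periodic orbit by compactness and the pigeonhole principle. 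Once these points are in place, letting $T\to\infty$ and extracting the contradiction with robustness of the sink count finishes the proof.
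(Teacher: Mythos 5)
Your high-level plan — argue by contradiction, assume no domination, use Franks's Lemma for flows to turn a long dissipative-saddle orbit into a sink, and then contradict the robustness $\card(\Sink(Z))=\card(\Sink(X))$ built into $X\in S(M)$ — is indeed the engine driving the paper's proof. But there is a genuine gap in step (2), where you invoke a "classical lemma (Mañé's argument, see Appendix A of \cite{bgv})" asserting that lack of domination along a long orbit lets one compose small shears and produce complex or equal-modulus eigenvalues. Appendix A of \cite{bgv} contains the Franks lemma for flows itself, not this conversion, and the conversion is precisely the substantive content that the paper has to prove from scratch. In the paper this is done in two stages, both via the sink-counting contradiction you correctly identify: Lemma~\ref{forcaunf} establishes a uniform eigenvalue gap $|\lambda(p,Y)|<\lambda^{t_{p,Y}}$, and Lemma~\ref{angle} establishes a uniform lower bound $\alpha$ on $\ang(N^{s,Y}_p,N^{u,Y}_p)$ over all $(p,Y)\in\sad_d(Y)\times\mathcal{U}$. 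These uniform bounds are what make the Franks perturbations controllable (they bound $\|P-I\|$, $\|S-I\|$, $\|T_j-I\|$ via Mañé's Lemma II.10 of \cite{M}), and without them one cannot quantify how small the rotations/shears need to be.

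A second, more structural discrepancy: once the uniform bounds are in place, the main perturbation in the proof of Proposition~\ref{t.dominalpf} does \emph{not} directly create a sink. The constructed $Z$ keeps $p$ as a dissipative saddle (with the same unstable eigenvalue $\mu(p,Z)=\mu(p,Y)$ and a smaller stable one), and what the perturbation achieves is to drive $\ang(N^{u,Z}_{Z_m(p)},N^{s,Z}_{Z_m(p)})$ below $\alpha$; the contradiction is therefore with the uniform angle bound of Lemma~\ref{angle}, not with the sink count. The sink-count argument is one level deeper: it is what proves Lemmas~\ref{forcaunf} and~\ref{angle}, not what finishes Proposition~\ref{t.dominalpf}. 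Relatedly, your remark that dissipativity "is preserved since the determinant changes little" glosses over a point the paper verifies carefully: the maps $T_j$, $P$, $S$ are designed so that $|\det P^Z_{t_{p,Z}}(p)|\le|\det P^Y_{t_{p,Y}}(p)|<1$, and this is not automatic for an arbitrary $\epsilon$-perturbation of the cocycle. Finally, you are right that the reduction from non-domination of the compact set $\sad^*_d(X)$ to failure of uniform domination over actual periodic orbits of nearby flows deserves a word (the paper states Proposition~\ref{ladilla2} "is clearly reduced to" Proposition~\ref{t.dominalpf}); the semicontinuity in Lemma~\ref{l5} together with the closedness of the domination condition handles it, as you suspected.
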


\begin{proof}[Proof of Theorem \ref{peo}]
Define $\phi: \diff\to 2^M_c$ by $\phi(X)=\cl(\sink(X))$.
This map is clearly lower semicontinuous, and so, upper semicontinuous in a residual subset $\mathcal{C}$ of $\diff$.
Define,
$$
\mathcal{A}=\{X\in\mathcal{C}:X\mbox{ has infinitely many sinks}\}.
$$
Fix $X\in \mathcal{C}\setminus \mathcal{A}$.
Then, $X\in\mathcal{C}$ and $\card(\sink(X))<\infty$.
Since $\phi$ is upper semicontinuous at $X$, we conclude that there is an open neighborhood $\mathcal{O}_X$ of $X$
such that $\card(\sink(Y))=\card(\sink(X))$ for every $Y\in \mathcal{O}_X$, and so, $\mathcal{O}_X\subset S(M)$.

By the Kupka-Smale theorem \cite{hk} we can find a dense subset
$\mathcal{D}_X\subset \mathcal{O}_X$ formed by $C^2$ Kupka-Smale three-dimensional flows.
Furthermore, we can assume that every $Y\in \mathcal{D}_X$
has neither normally contracting nor normally expanding
irrational tori (see \cite{ar} for the corresponding definition).

Let us prove that $\sad_d^*(Y)$ hyperbolic for every $Y\in\mathcal{D}_X$.
Take any $Y\in \mathcal{D}_X$.
Then $Y\in S(M)$, and so,
$\sad_d^*(Y)$ has a dominated splitting by Proposition \ref{ladilla2}.
On the other hand, it is clear from the definition that every periodic point of $Y$ in
$\sad_d^*(Y)$ is saddle. Then, Theorem B in \cite{ar} implies that
$\sad_d^*(Y)$ is the union of a hyperbolic set and normally contracting irrational tori.
Since no $Y\in \mathcal{D}_X$ has such tori, we are done.

We claim that every $Y\in\mathcal{D}_X$ exhibits an open neighborhood $\mathcal{V}_Y\subset \mathcal{O}_X$
such that $\sad_d^*(Z)$ is hyperbolic, $\forall Z\in\mathcal{V}_Y$.
Indeed, fix $Y\in \mathcal{D}_X$. Since $\sad_d^*(Y)$ is hyperbolic we can choose
a neighborhood $U_Y$ of $\sad_d^*(Y)$ and a neighborhood $\mathcal{V}_Y$ of $Y$ such that any compact invariant set of
any $Z\in \mathcal{V}_Y$ is hyperbolic \cite{hk}.
Applying Lemma \ref{l5} we can assume that $\sad_d^*(Z)\subset U_Y$, for every $Z\in \mathcal{V}_Y$,
proving the claim.

Define
$$
\mathcal{O}_X'=\displaystyle\bigcup_{Y\in\mathcal{D}_X}\mathcal{V}_Y.
$$
Then, $\mathcal{O}_X$ is open and dense in $\mathcal{O}_X$.
Define
$$
\mathcal{O}=\displaystyle\bigcup_{X\in\mathcal{C}\setminus \mathcal{A}}\mathcal{O}_X\quad\quad
\mbox{ and }\quad\quad
\mathcal{O}'=\displaystyle\bigcup_{X\in\mathcal{C}\setminus \mathcal{A}}\mathcal{O}_X'.
$$
It turns out that $\mathcal{O}$ is open and
that $\mathcal{A}\cup \mathcal{O}$ is a residual subset of three-dimensional flows.
Since $\mathcal{O}'$ is open and dense in $\mathcal{O}$, we conclude
that $\mathcal{Q}=\mathcal{A}\cup \mathcal{O}'$ is also a residual subset of three-dimensional flows
(see Proposition 2.6 in \cite{m}).

Now, take $Y\in \mathcal{Q}$ with $\card(\sink(Y))<\infty$.
Then, $Y\notin \mathcal{A}$ and so $Y\in\mathcal{O}_X'$ for some $X\in\mathcal{C}\setminus \mathcal{A}$.
From this we conclude that $\sad_d^*(Y)$ is hyperbolic and we are done.
\end{proof}

\section{Proof of Araujo's Theorem for nonsingular flows}

\noindent
Let $\mathcal{R}_2$, $\mathcal{R}_6$,
$\mathcal{R}_{11}$ and $\mathcal{Q}$ be given by Lemma \ref{homoneutral}, Theorem \ref{move-attractor},
Theorem \ref{fui} and Theorem \ref{peo} respectively.
Define,
$$
\mathcal{R}=\mathcal{R}_2\cap\mathcal{R}_6\cap\mathcal{R}_{11}\cap\mathcal{Q}.
$$
Then, $\mathcal{R}$ is a residual subset of three-dimensional flows.

Now, take $X\in \mathcal{R}$ with $\card(\sink(X))<\infty$.
Since $X\in\mathcal{Q}$, we conclude from Theorem \ref{peo} that $\sad_d^*(X)$ is hyperbolic.
But clearly $\cl(\sad_d(X))\subset\sad_d^*(X)$ thus $\cl(\sad_d(X))$ is hyperbolic too.
As $X$ is Kupka-Smale and $\card(\sink(X))<\infty$, we can apply (\ref{split2}) to conclude that
$\dis(X)$ is hyperbolic.
Since $X\in\mathcal{R}_2$, we can apply Lemma \ref{homoneutral} to obtain
a finite disjoint collection of homoclinic classes $H_1,\cdots, H_r$
and sinks $s_1,\cdots, s_l$ satisfying
$$
W^s_w(\dis(X))=\left(\displaystyle\bigcup_{i=1}^rW^s(H_i)\right)\cup\left(\displaystyle\bigcup_{j=1}^lW^s(s_j)\right).
$$
As $X\in \mathcal{R}_6$, we have
$m(W^s_w(\dis(X)))=1$ by Theorem \ref{move-attractor}.
We conclude that
$$
m\left(
\left(\displaystyle\bigcup_{i=1}^rW^s(H_i)\right)\cup\left(\displaystyle\bigcup_{j=1}^lW^s(s_j)\right)
\right)=1.
$$
Let $1\leq i_1\leq \cdots \leq i_d\leq r$ be such that $m(W^s(H_{i_k}))>0$ for every $1\leq k\leq d$.
As the basin of the remainder homoclinic classes in the collection
$H_1,\cdots,H_r$ are negligible, we can remove them from the above inequality yielding
$$
m\left(
\left(\displaystyle\bigcup_{k=1}^dW^s(H_{i_k})\right)\cup\left(\displaystyle\bigcup_{j=1}^lW^s(s_j)\right)
\right)=1.
$$
Since $f\in \mathcal{R}_{11}$, we have from Theorem \ref{fui} that $H_{i_k}$ is a hyperbolic attractor for every $1\leq k\leq d$.
From this we obtain the result.
\qed

\section{Proof of Proposition \ref{ladilla2}}
\label{secap}

\noindent
First we introduce some basic notations.
Given a three-dimensional flow $X$
and $x\in\sad(X)$,
we denote by
$E^{s}_x$ and $E^{u}_x$ the eigenspaces corresponding to the eigenvalues
of modulus less and bigger than $1$ of $x$
respectively.
We also denote
$N^s_x=\pi_x E^s_x$ and $N^u_x=\pi_x E^u_x$.
Notations $E^{s,X}_x$, $E^{u,X}_x$, $N^{s,X}_x$ and $N^{u,X}_x$ will indicate dependence on $X$.

Notice that if $p\in\sad(Y)$ for some three-dimensional flow $Y$,
then
$$
P_{t_{p,Y}}^Y(p)/N^{s,Y}_p=\lambda(p,Y)I\quad\mbox{ and }\quad
P_{t_{p,Y}}^Y(p)/N^{u,Y}_p=\mu(p,Y)I,
$$
where $I$ is the identity whereas $\lambda(p,Y)$ and $\mu(p,Y)$ are the eigenvalues of $p$ satisfying
$$
|\lambda(p,Y)|<1<|\mu(p,Y)|.
$$

Proposition \ref{ladilla2} is clearly reduced to the following one. 

\begin{proposition}
\label{t.dominalpf}
For every $X\in S(M)$ there are a $C^1$ neighborhood $\SV$
and $T>0$ such that
$$
\left\|P^Y_T(p)/N^{s,Y}_p\right\|\left\|P^Y_{-T}(Y_T(p))/N^{u,Y}_{Y_T(p)}\right\|\leq \frac{1}{2},
\quad\quad\forall (p,Y)\in \sad_d(Y)\times \SV.
$$
\end{proposition}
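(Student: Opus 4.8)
The plan is to argue by contradiction and use Franks's Lemma for flows (Lemma~\ref{frank}) to manufacture a $C^1$-small perturbation of $X$ with strictly more sinks than $X$, which is impossible because $X\in S(M)$. So suppose the statement fails. Negating it and letting the $C^1$ neighborhood shrink to $\{X\}$ while the constant is taken to be $k$, I obtain $Y^k\to X$ in $\diff$ and $p_k\in\sad_d(Y^k)$ for which the product $\|P^{Y^k}_{k}(p_k)/N^{s,Y^k}_{p_k}\|\cdot\|P^{Y^k}_{-k}(Y^k_{k}(p_k))/N^{u,Y^k}_{Y^k_{k}(p_k)}\|$ exceeds $\frac{1}{2}$. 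Write $t_k=t_{p_k,Y^k}$, $\lambda_k=\lambda(p_k,Y^k)$, $\mu_k=\mu(p_k,Y^k)$, so $|\lambda_k|<1<|\mu_k|$ and, since $p_k$ is dissipative, $|\lambda_k\mu_k|<1$.

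First I would reduce to the case $t_k\to\infty$. The displayed product equals the ratio $\|P^{Y^k}_k(p_k)/N^{s,Y^k}_{p_k}\|/\|P^{Y^k}_k(p_k)/N^{u,Y^k}_{p_k}\|$, which after each full return to $p_k$ is multiplied by exactly $|\lambda_k|/|\mu_k|<1$. Hence, if some subsequence of $(t_k)$ stays bounded, a window of length $k$ contains of order $k/t_k\to\infty$ full returns and the product tends to $0$, contradicting the lower bound $\frac{1}{2}$ — unless $|\lambda_k|\to1$, in which case $|\mu_k|\to1$ too (again because $|\lambda_k\mu_k|<1$), and then $p_k$ can be turned into a sink by a $C^1$-small perturbation of $Y^k$, which, as in the last paragraph below, already produces the contradiction. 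So I may assume $t_k\to\infty$.

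Over the periodic orbit $O_{Y^k}(p_k)$ the Linear Poincar\'e flow leaves invariant the continuous splitting $N=N^s\oplus N^u$ into the two one-dimensional subbundles obtained by transporting the eigenspaces at $p_k$ along the orbit, and the displayed inequality says precisely that this splitting is not $(k,\frac{1}{2})$-dominated: over some sub-orbit window of length $k$ the ratio above is not contracted by $\frac{1}{2}$. The core of the argument is then a Ma\~n\'e--Franks type perturbation adapted to this $1+1$-dimensional situation: given $\eta>0$, the failure of domination over a long window lets one build a partition $0=s_0<\cdots<s_m=t_k$ with $s_{i+1}-s_i\le1$ and linear maps $L_i\colon N_{Y^k_{s_i}(p_k)}\to N_{Y^k_{s_{i+1}}(p_k)}$ with $\|L_i-P^{Y^k}_{s_{i+1}-s_i}(Y^k_{s_i}(p_k))\|<\eta$ whose composition over the full period is linearly conjugate to $\rho_k\cdot R$ for some rotation $R$ of the plane $N_{p_k}$; arranging the perturbation to preserve the determinant of the return map forces $\rho_k^{2}=|\lambda_k\mu_k|<1$, so this new return map has both eigenvalues of modulus strictly less than $1$. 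I expect this realization step to be the main obstacle, since it is exactly where the long-window failure of domination has to be converted into a genuinely $C^1$-small, determinant-controlled rotation; I would carry it out by adapting Ma\~n\'e's perturbation lemmas together with the linear-Poincar\'e-flow machinery behind Lemma~\ref{frank} (cf. Appendix~A of \cite{bgv}).

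To finish, take $\eta$ below the Franks constant $\epsilon$ attached (with $T=1$) to a suitably small neighborhood $W(X)$ of $X$ and apply Lemma~\ref{frank}: for $k$ large we get $Z_k\in W(X)$ with $Z_k=Y^k$ along $O_{Y^k}(p_k)$ and outside a tubular neighborhood $U_k$ of it, and with $O_{Y^k}(p_k)$ a sink of $Z_k$. Since $Y^k$ has only finitely many sinks, all hyperbolic periodic orbits disjoint from the saddle orbit $O_{Y^k}(p_k)$, I may choose $U_k$ disjoint from them, so they persist for $Z_k$; hence $\card(\Sink(Z_k))\ge\card(\Sink(Y^k))+1$. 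But $Y^k\to X$ and $Z_k\to X$, so for $k$ large both lie in the neighborhood $\mathcal U_X$ witnessing $X\in S(M)$, which gives $\card(\Sink(Z_k))=\card(\Sink(Y^k))=\card(\Sink(X))$ — a contradiction. Thus the desired $\SV$ and $T$ exist. Finally, Proposition~\ref{ladilla2} follows because $\sad^*_d(X)$ is a limit of the sets $\sad_d(Y)$ as $Y\to X$ and the $T$-domination inequality is a closed condition, so the uniformly dominated splittings over $\sad_d(Y)$ for $Y\in\SV$ pass to a dominated splitting over $\sad^*_d(X)$.
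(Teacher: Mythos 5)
Your contradiction-and-Franks scaffold is the right outer shell, and the endgame (disjoint tubular neighborhood preserving the existing sinks, then $\card(\Sink(Z_k))>\card(\Sink(Y^k))$ contradicting $X\in S(M)$) is exactly how the paper closes each of its three perturbation arguments. But the central step you flag as ``the main obstacle'' is a genuine gap, not a technicality you can defer. You claim that the failure of $(T,\tfrac12)$-domination over a long window alone lets you build maps $L_i$ that are $\eta$-close to the Linear Poincar\'e flow and whose composition over the full period is $\rho_k\cdot R$ for a rotation $R$ with $\rho_k^2=|\lambda_k\mu_k|<1$. This is essentially the whole difficulty, and it is not true as a one-shot deduction from non-domination. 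Non-domination tells you about the ratio $\|P^{Y^k}_T/N^{s}\|/\|P^{Y^k}_T/N^{u}\|$ over a window; by itself this gives you no control over the angle between $N^s$ and $N^u$ along the orbit, nor over how close the eigenvalues are to the unit circle, and without those controls you cannot bound the size of the shears needed to turn the return map into a rotation. That is precisely why the paper first isolates two quantitative a priori bounds that hold uniformly for all $(p,Y)\in\sad_d(Y)\times\mathcal U_X$: Lemma~\ref{forcaunf} (a uniform exponential gap $|\lambda(p,Y)|<\lambda^{t_{p,Y}}$, proved by a Franks perturbation that would otherwise create a new sink) and Lemma~\ref{angle} (a uniform lower bound $\alpha$ on $\ang(N^{s,Y}_p,N^{u,Y}_p)$, proved by composing the return map with a shear that makes it traceless with determinant $<1$, hence a sink). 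Your reduction to $t_k\to\infty$ also silently invokes Lemma~\ref{forcaunf}-type information in the ``unless $|\lambda_k|\to1$'' branch, so that lemma is needed twice.

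The paper's final step then does something structurally different from what you propose: it does not try to turn $p$ directly into a sink. Instead, following Pujals--Sambarino, it uses the failure of domination at an intermediate time $m$, together with the maps $P$, $S$ (small tilts of $N^u_p$ at the beginning and end of the period) and the stretch maps $T_j$ (small scalings of the $N^s$-direction), to produce a perturbation $Z\in W(X)$ for which $p$ is \emph{still} a dissipative saddle, with $N^{s,Z}_p=N^{s,Y}_p$ and $N^{u,Z}_p=N^{u,Y}_p$, but whose stable/unstable angle at the intermediate point $Z_m(p)$ has been driven below $\alpha$; this contradicts Lemma~\ref{angle}, not the sink-count. (The displayed maps $P,S,T_j$ are engineered to keep the eigenspaces and eigenvalues at $p$ under control and to keep the determinant below $1$; their $\epsilon$-smallness is only available \emph{because} of the angle bound $\alpha$ and the eigenvalue gap $\lambda$, via Lemma II.10 of Ma\~n\'e.) So to make your argument rigorous you would have to establish both auxiliary lemmas and then carry out the $P,S,T_j$ construction anyway; as written, the rotation claim asserts the conclusion of that entire machinery without supplying it.
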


Its proof is based on two lemmas.

\begin{lemma}
\label{forcaunf}
For every $X\in S(M)$ there are a neighborhood $\mathcal{U}_X$ of $X$
and $0<\lambda<1$ such that
$$
|\lambda(p,Y)|<\lambda^{t_{p,Y}},
\quad\quad\forall (p,Y)\in\sad_d(Y)\times \mathcal{U}_X.
$$
\end{lemma}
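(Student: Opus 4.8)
The goal is Lemma~\ref{forcaunf}: for $X\in S(M)$ there is a neighborhood $\mathcal{U}_X$ and $0<\lambda<1$ with $|\lambda(p,Y)|<\lambda^{t_{p,Y}}$ for all dissipative saddles $p$ of all $Y\in\mathcal{U}_X$. The plan is to argue by contradiction: if no such uniform contraction rate exists, then we can build, by perturbation, a sink with arbitrarily long period inside a fixed neighborhood of $X$, contradicting the robustness of the finite set of sinks built into the definition of $S(M)$.

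\textbf{Setup of the contradiction.} Fix a neighborhood $W(X)$ small enough that $S(M)$-robustness applies along it (so every $Y\in W(X)$ has exactly $\card(\Sink(X))$ sinks), and shrink it further using Franks's Lemma for flows (Lemma~\ref{frank}) with $T=1$ to get $W_0(X)$ and, for that value of $T$, a number $\epsilon>0$. Suppose for contradiction that for every neighborhood and every $\lambda<1$ the conclusion fails. Taking $\lambda_n\to 1^-$ we produce $Y^n\to X$ (eventually in $W_0(X)$) and dissipative saddles $p_n\in\sad_d(Y^n)$ with $|\lambda(p_n,Y^n)|\geq \lambda_n^{t_{p_n,Y^n}}$, i.e. the weak stable eigenvalue of $p_n$ is only very mildly contracting relative to its period.

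\textbf{The perturbation.} For each such $p_n$ I want to use Franks's Lemma to multiply the linear Poincar\'e return map along the weak stable direction by a small uniform damping factor $e^{-\beta/2}$ per unit time, exactly as in the proof of Theorem~\ref{move-attractor}: partition the orbit into steps of length $\leq 1$, replace $P^{Y^n}_{t_{i+1}-t_i}(\cdot)$ on the appropriate one-dimensional subbundle by $e^{-(t_{i+1}-t_i)\beta/2}$ times itself (with $\beta$ chosen so that $|1-e^{-\beta/2}|C<\epsilon$, $C=\sup\{\|P^Z_t(x)\|\}$ over $W(X)\times M\times[0,1]$), and get $Z^n\in W(X)$, $Z^n=Y^n$ along $O_{Y^n}(p_n)$, with the new weak eigenvalue equal to $e^{-t_{p_n,Y^n}\beta/2}\lambda(p_n,Y^n)$. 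The point is that $\beta$ is a \emph{fixed} constant (depending only on $X$ through $W(X)$ and Franks's $\epsilon$), while $\lambda(p_n,Y^n)\geq\lambda_n^{t_{p_n,Y^n}}$ with $\lambda_n\to 1$; so for $n$ large, $e^{-t_{p_n,Y^n}\beta/2}\lambda_n^{t_{p_n,Y^n}}<1$, hence the new weak eigenvalue has modulus $<1$. The unstable eigenvalue is untouched, so $p_n$ remains a saddle of $Z^n$ --- this does \emph{not} by itself create a sink.

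\textbf{Killing the unstable direction too, and deriving the contradiction.} The fix is that the \emph{same} Franks perturbation is applied simultaneously on the strong unstable one-dimensional subbundle, multiplying it by $e^{+t_{p_n,Y^n}\beta/2}$... no --- rather, since $p_n$ is \emph{dissipative}, $|\lambda(p_n,Y^n)\mu(p_n,Y^n)|<1$, so $|\mu(p_n,Y^n)|<|\lambda(p_n,Y^n)|^{-1}\leq \lambda_n^{-t_{p_n,Y^n}}$, i.e. the unstable eigenvalue is only mildly expanding relative to the period. Apply Franks's Lemma on $N^{u}$ as well, damping by $e^{-t_{p_n,Y^n}\beta/2}$; then the new unstable eigenvalue has modulus $<e^{-t_{p_n,Y^n}\beta/2}\lambda_n^{-t_{p_n,Y^n}}$, which for $n$ large is $<1$. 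After a further arbitrarily small perturbation removing eigenvalues of modulus $1$, $p_n$ becomes a \emph{sink} of some $Z^n\in W(X)$, with period $t_{p_n,Z^n}=t_{p_n,Y^n}$. If the periods $t_{p_n,Y^n}$ are bounded, one first passes to a subsequence where $p_n\to p_\infty$ and $t_{p_n,Y^n}\to t_\infty$; then either $p_\infty$ is already a non-sink saddle of $X$ in the limit of robust-sink count (contradicting that $X$ has a fixed robust sink set disjoint from $\overline{\sad_d}$), or the period tends to $0$, which is impossible for a saddle since eigenvalues stay bounded away from $1$ on a compact time window. If the periods are unbounded, then the $Z^n$ (all in $W(X)$) have sinks of unbounded period, which is fine in number but one checks that these sinks are \emph{new}: their orbits, being dissipative saddles of $Y^n$ before the perturbation and lying in $\cl(\sad_d(Y^n))$, are disjoint (for $n$ large, using upper-semicontinuity as in Theorem~\ref{move-attractor}) from the continuations of the $\card(\Sink(X))$ original sinks; so $\card(\Sink(Z^n))>\card(\Sink(X))$, contradicting $Z^n\in W(X)\subset$ the robustness neighborhood of $X\in S(M)$.

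\textbf{Main obstacle.} The delicate point is the bookkeeping that the newly created sink is genuinely extra rather than a continuation of an old one --- this is where robustness of $S(M)$ and an upper-semicontinuity argument for $\cl(\sad_d(\cdot))$ (separating $\sad_d$ from the fixed sink set near $X$) must be combined, together with ruling out the bounded-period degenerate case by a compactness/limiting argument. The Franks's Lemma application itself is routine and essentially copied from the proof of Theorem~\ref{move-attractor}; the constant $\beta$ being independent of $n$ while the eigenvalue estimates degrade only at rate $\lambda_n^{\pm t}$ is the mechanism that makes everything work.
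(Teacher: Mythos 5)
Your core mechanism is the same as the paper's: if the uniform contraction rate fails, use Franks's Lemma for flows to multiply the linear Poincar\'e return map of a dissipative saddle $p$ of some nearby $Y$ by a fixed damping factor $e^{-\beta/2}$ (resp.\ $(1-\delta)$) per unit time, turning $p$ into a sink of a flow $Z$ in the fixed robustness neighborhood $W(X)$, and thereby exceed the robust sink count $\card(\Sink(X))$. The paper does this by multiplying the whole return map by the scalar $(1-\delta)^{t_{i+1}-t_i}$ on each partition step, which simultaneously damps both eigenvalues; your two-subbundle version amounts to the same thing once you use the same factor on $N^s$ and $N^u$, so the Franks estimate is unproblematic. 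However, two points are more elaborate than they need to be and one is shaky.

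First, the sequence $\lambda_n\to 1^-$ is unnecessary. As in the paper, you should fix $\delta>0$ (equivalently $\beta$) \emph{once}, from the Franks $\epsilon$ and the constant $C=\sup\|P^Z_t(x)\|$, and then the contradiction hypothesis hands you a single $Y\in W_0(X)$ and a single $p\in\sad_d(Y)$ with $(1-\delta)^{t_{p,Y}}\leq|\lambda(p,Y)|$. One perturbation, one new sink; no sequence, no case split on whether the periods $t_{p_n,Y^n}$ are bounded or unbounded. In particular your ``bounded-period'' sub-argument (passing to a limit point $p_\infty$, appealing to the sink set being ``disjoint from $\overline{\sad_d}$'', and ruling out period $\to 0$) is not supported by the definitions in the paper and is not needed.

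Second, the ``bookkeeping'' you flag as the main obstacle is in fact the easy part, and your proposed route through upper-semicontinuity of $\cl(\sad_d(\cdot))$ is both unnecessary and not obviously correct. The paper simply observes that $p$ is a \emph{saddle} of $Y$, so its orbit is disjoint from the finite set $\Sink(Y)$; one can therefore choose the tubular neighborhood $U$ of $O_Y(p)$ disjoint from $\Sink(Y)$ and take $Z=Y$ outside $U$. Then every sink of $Y$ is automatically still a sink of $Z$, and $p$ is a genuinely new one, giving $\card(\Sink(Z))>\card(\Sink(Y))=\card(\Sink(X))$ directly. You should replace your semicontinuity/dichotomy argument with this. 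With those two simplifications, your proof becomes essentially identical to the paper's.
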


\begin{proof}
Since $X\in S(M)$, $\card(\sink(X))<\infty$ and we can fix a neighborhood $W(X)$ of $X$
such that
\begin{equation}
\label{elo}
\card(\sink(Z))=\card(\sink(X)),
\quad\quad\forall Z\in W(X).
\end{equation}
Applying the Franks's Lemma for flows with $T=1$ we obtain a neighborhood $W_0(X)\subset W(X)$ and $\epsilon>0$.

We claim that $\mathcal{U}_X=W_0(X)$ satisfies the conclusion of the lemma.
Indeed, suppose by contradiction that this is not true.
Set
$$
C=\sup\{\|P^Z_t(x)\|:(Z,x,t)\in W(X)\times M\times [0,1]\}
$$
and choose $0<\delta<1$ such that
$$
|(1-\delta)^t-1|<\frac{\epsilon}{C}, \quad\quad\forall 0\leq t\leq 1.
$$
Since the conclusion is not true for $\mathcal{U}_X=W_0(X)$ we can arrange $Y\in W_0(X)$ and $p\in\sad_d(Y)$
such that $(1-\delta)^{t_{p,Y}}\leq |\lambda|$
(for simplicity we write $\lambda=\lambda(p,Y)$ and $\mu=\mu(p,Y)$).
Since $p\in\sad_d(Y)$ we also have
$|\lambda\mu|<1$ thus $|\mu|=|\lambda|^{-1}|\lambda\mu|<(1-\delta)^{-t_{p,Y}}$ yielding
$$
|\mu|<(1-\delta)^{-t_{p,Y}}.
$$
Now write $t_{p,Y}=n+r$ where $n$ is the integer part of $t_{p,Y}$ and $0\leq r<1$.
Consider the partition $t_i=i$ for $0\leq i\leq n$ and $t_{n+1}=t_{p,Y}$.
It follows that $t_{i+1}-t_i\leq 1$ for every $0\leq i\leq n$.
Consider a small tubular neighborhood $U$
of $O_Y(p)$, disjoint of $\Sink(Y)$.
Define the maps $L_i:N_{Y_{t_i}(p)}\to N_{Y_{t_{i+1}}(p)}$ by
$$
L_i=(1-\delta)^{t_{i+1}-t_i}P^Y_{t_{i+1}-t_i}(Y_{t_i}(p)),
\quad\quad\forall 0\leq i\leq n.
$$
The choice of $\delta$ implies
$$
\left\|L_i-P^Y_{t_{i+1}-t_i}(Y_{t_i}(p))\right\|\leq |(1-\delta)^{t_{i+1}-t_i}-1|C<\epsilon,
\quad\quad\forall 0\leq i\leq n.
$$
By Franks's Lemma for flows, there exists $Z\in W(X)$ such that $Z=Y$ along $O_Y(p)$ and
outside $U$ satisfying $P^Z_{t_{i+1}-t_i}(Z_{t_i}(p))=L_i$ for every $0\leq i\leq n$. This implies that
$$
P^Z_{t_{p,Y}}(p)=\displaystyle\prod_{i=0}^{n+1}L_i=(1-\delta)^{t_{p,Y}}P^Y_{t_{p,Y}}(p),
$$
and thus the eigenvalues of $P^Z_{t_{p,Y}}(p)$ are $(1-\delta)^{t_{p,Y}}\lambda$ (of modulus less than $1$) and
$(1-\delta)^{t_{p,Y}}\mu$ (of modulus less than $1$ too).
Therefore, $p\in\sink(Z)$.
Since all sinks of $Y$ are located outside $U$, they are also sinks for $Z$.
This implies that $\card(\Sink(Z))>\card(\Sink(Y))=\card(\sink(X))$. Since $Z\in W(X)$ we obtain a contradiction from (\ref{elo}) proving the result.
\end{proof}

The orthogonal complement of a linear subspace $E$ of $\mathbb{R}^2$ is denoted by
$E^\perp$.
The {\em angle} between linear spaces $E,F$ of $\mathbb{R}^2$ is defined by $\ang(E,F)=\|L\|$, where $L: E\to E^\perp$ is the linear operator
satisfying $F=\{u+L(u):u\in E\}$.

\begin{lemma}
\label{angle}
For every $X\in S(M)$ there exist a neighborhood
$\SU$ and $\al>0$ such that
$$
\ang(N^{s,Y}_p,N^{u,Y}_p)>\al,
\quad\quad \forall (p,Y)\in\sad_d(Y)\times\SU.
$$
\end{lemma}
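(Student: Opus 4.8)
The plan is to argue by contradiction: if some dissipative saddle of a flow $C^1$-close to $X$ has its stable and unstable normal directions making a very small angle, then the linear Poincar\'e return map there is a strong shear, and a $C^1$-small perturbation localised on the orbit (inserting a rotation of the normal fibre over one step of the orbit) turns that saddle into a sink. Since the other sinks of the flow survive such a localised perturbation, this manufactures a flow close to $X$ with strictly more sinks than $X$, contradicting $X\in S(M)$.

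So suppose the conclusion fails; then there are $Y^k\to X$ and $p_k\in\sad_d(Y^k)$ with $\al_k:=\ang(N^{s,Y^k}_{p_k},N^{u,Y^k}_{p_k})\to 0$. Since $X\in S(M)$ I would first fix a neighbourhood $W(X)$ of $X$ with $\card(\sink(Z))=\card(\sink(X))$ for every $Z\in W(X)$, feed $W(X)$ and $T=1$ into Franks's Lemma for flows (Lemma \ref{frank}) to obtain $W_0(X)\subset W(X)$ and $\eps>0$, and put $C=\sup\{\|P^Z_t(x)\|:(Z,x,t)\in W(X)\times M\times[0,1]\}<\infty$. For all large $k$ we have $Y^k\in W_0(X)$ and $5\tan\al_k<\min\{\eps/C,\pi/6\}$. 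Fixing such a $k$ and writing $A=P^{Y^k}_{t_{p_k,Y^k}}(p_k)$, this is a linear automorphism of $N_{p_k}$ with real eigenvalues $\la=\la(p_k,Y^k)$ and $\mu=\mu(p_k,Y^k)$ along the lines $N^{s,Y^k}_{p_k}$ and $N^{u,Y^k}_{p_k}$, and, $p_k$ being a dissipative saddle, $|\la|<1<|\mu|$ and $|\la\mu|<1$. In an orthonormal basis of $N_{p_k}$ whose first vector spans $N^{s,Y^k}_{p_k}$, $A$ is upper triangular with diagonal $(\la,\mu)$ and upper-right entry $c=(\mu-\la)\cot\al_k$.

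The core step is a two-by-two matrix computation: there is a rotation $R$ of $N_{p_k}$, of angle $\pm\psi$ with $\psi\in[0,5\tan\al_k]$ (the sign governed by that of $\mu$), such that $\det(RA)=\la\mu$ and $\tr(RA)$ lies in the open interval $(-(1+\la\mu),\,1+\la\mu)$; an elementary stability criterion for $2\times2$ matrices (both eigenvalues lie in the open unit disk once $|\det|<1$ and $|\tr|<1+\det$) then gives that all eigenvalues of $RA$ have modulus strictly less than $1$. Indeed, taking $R$ to be the rotation by $-\psi$ one computes $\tr(RA)=(\la+\mu)\cos\psi-c\sin\psi$ and $\det(RA)=\la\mu$, while $\tr A=\la+\mu$ lies strictly outside $[-(1+\la\mu),1+\la\mu]$ precisely because $p_k$ is a saddle, the distance to the interval being $(1-\la)(\mu-1)$ when $\mu>1$ and $(1+\la)(|\mu|-1)$ when $\mu<-1$; since a small angle forces $c$ to be large, with $|c\tan\al_k|=|\mu-\la|$ dominating that distance (one has $(1-\la)(\mu-1)\le 2|\mu-\la|$, and likewise in the other case), a rotation of size $O(\tan\al_k)$ suffices to push $\tr(RA)$ into the stability interval by the intermediate value theorem. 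Carrying this out uniformly in the eigenvalues $\la,\mu$, which are not a priori bounded, is the only delicate point; it works because the relevant ratios, such as $(1-\la)(\mu-1)/|\mu-\la|$, are bounded by $2$ whatever $\la,\mu$ are.

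Finally I would realise $R$ along the orbit. Choose a tubular neighbourhood $U$ of $O_{Y^k}(p_k)$ disjoint from $\sink(Y^k)$ — possible because $\sink(Y^k)$ is a finite, hence closed, union of periodic orbits, none of which is the saddle orbit $O_{Y^k}(p_k)$ — write $t_{p_k,Y^k}=n+r$ with $n=\lfloor t_{p_k,Y^k}\rfloor$, and take, as in the proof of Lemma \ref{forcaunf}, the partition $0=t_0<\dots<t_{n+1}=t_{p_k,Y^k}$ with $t_i=i$ for $0\le i\le n$, so $t_{i+1}-t_i\le 1$. Set $L_i=P^{Y^k}_{t_{i+1}-t_i}(Y^k_{t_i}(p_k))$ for $i<n$ and $L_n=R\circ P^{Y^k}_{t_{n+1}-t_n}(Y^k_{t_n}(p_k))$, which is legitimate since $Y^k_{t_{n+1}}(p_k)=p_k$ so $R$ acts on the right fibre. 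Then $\|L_i-P^{Y^k}_{t_{i+1}-t_i}(Y^k_{t_i}(p_k))\|\le\|R-I\|\,C\le 5C\tan\al_k<\eps$ for every $i$, so Franks's Lemma for flows produces $Z\in W(X)$ equal to $Y^k$ along $O_{Y^k}(p_k)$ and outside $U$ with $P^Z_{t_{i+1}-t_i}(Z_{t_i}(p_k))=L_i$; hence $t_{p_k,Z}=t_{p_k,Y^k}$ and $P^Z_{t_{p_k,Z}}(p_k)=L_n\cdots L_0=RA$, so the eigenvalues of $p_k$ for $Z$ have modulus $<1$, i.e.\ $p_k\in\sink(Z)$. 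Since $Z=Y^k$ outside $U$ and the sinks of $Y^k$ avoid $U$, they remain sinks of $Z$, on orbits distinct from $O_Z(p_k)$; thus $\card(\sink(Z))\ge\card(\sink(Y^k))+1=\card(\sink(X))+1$, and as $Z\in W(X)$ this contradicts the choice of $W(X)$, which completes the proof.
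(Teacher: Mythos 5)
Your proof is correct, but it differs from the paper's in an interesting way. The paper composes the linear Poincar\'e return map $A=P^Y_{t_{p,Y}}(p)$ with a lower-triangular \emph{shear} $\begin{smallmatrix}1&0\\\beta&1\end{smallmatrix}$ whose off-diagonal entry $\beta=\tfrac{\lambda+\mu}{\lambda-\mu}\gamma$ is chosen to make the product exactly traceless, so that its eigenvalues are $\pm\sqrt{\lambda\mu}$; to bound $\|A-I\|=|\beta|$, the paper must control $|\lambda-\mu|$ from below, which it does by invoking Lemma \ref{forcaunf} (the uniform contraction rate for dissipative saddles). You instead compose with a small \emph{rotation} and do not aim for a traceless matrix; you merely need to push the trace into the Jury stability strip $(-(1+\lambda\mu),1+\lambda\mu)$, and the key estimate that governs how much rotation is needed is the elementary AM--GM-type inequality $(1-\lambda)(\mu-1)<2|\mu-\lambda|$ (write $a=1-\lambda\in(0,2)$, $b=\mu-1>0$; then $1/a>1/2$ gives $ab<2(a+b)$), which holds for \emph{every} saddle spectrum $|\lambda|<1<|\mu|$ and so requires no appeal to Lemma \ref{forcaunf}. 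Thus your argument is genuinely more self-contained: Lemma \ref{angle} becomes logically independent of Lemma \ref{forcaunf}, whereas in the paper it sits downstream of it. The price is a slightly more delicate intermediate-value argument --- one must take care not to overshoot the (possibly very narrow, when $\lambda\mu$ is near $-1$) stability interval, and one must verify the decrease of $(\lambda+\mu)\cos\psi-c\sin\psi$ is effective over $[0,5\tan\alpha_k]$; your sketch handles this correctly via the strict inequality above, and the fact that continuity of $\psi\mapsto\tr(RA)$ forces it to pass through the open interval once it crosses the endpoint. A minor notational quibble: the paper defines $\ang(E,F)$ as $\|L\|$ where $F=\{u+Lu\}$, i.e.\ the \emph{tangent} of the Euclidean angle, so in the paper's convention the upper-right entry of $A$ is $(\mu-\lambda)/\alpha_k$ rather than your $(\mu-\lambda)\cot\alpha_k$; since $\alpha_k\to0$ the two are asymptotically the same and this has no effect on the argument, but it would need to be reconciled in a final write-up. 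Finally, the paper's claim that the perturbed traceless map has ``a pair of complex eigenvalues'' is only true when $\lambda\mu<0$; your Schur--Cohn formulation handles both real and complex cases uniformly, which is a small expository improvement.
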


\begin{proof}
Let $\mathcal{U}_X$ and $0<\lambda<1$ be given by Lemma \ref{forcaunf}.
Since $X\in S(M)$ we can also assume that
$\card(\sink(Z))=\card(\sink(X))$ for every $Z\in \mathcal{U}_X$.

Put $W(X)=\mathcal{U}_X$ in the Franks's Lemma for flows with $T=1$
to obtain the neighborhood $W_0(X)\subset W(X)$ of $X$ and $\epsilon>0$.
Set
$$
C=\sup\{\|P^Z_t(x)\|:(Z,x,t)\in W(X)\times M\times [0,1]\}
$$
and choose $\alpha>0$ such that
$$
\left(\frac{2}{1-\lambda}+1\right)\alpha<\frac{\epsilon}{C}.
$$

We claim that $\SU=W_0(X)$ and $\al$ as above
satisfies the conclusion of the lemma.
Indeed, suppose by contradiction that this is not true.
Then, there are $(p,Y)\in\sad_d(Y)\times W_0(X)$
satisfying
$$
\ang(N^{s,Y}_p,N^{u,Y}_p)\leq \alpha.
$$
Clearly we can fix a tubular neighborhood $U$ of $O_Y(p)$
disjoint from $\sink(Y)$.

With respect to the orthogonal splitting
$N^Y_p=N^{s,Y}_p\oplus [N^{s,Y}_p]^{\perp}$ one has the matrix expression
\begin{equation}
\label{matrix1}
P^Y_{t_{p,Y}}(p) =\begin{bmatrix}
 \la(p,Y) & \frac{\mu(p,Y)-\la(p,Y)}{\gamma}\\
 0 & \mu(p,Y)
\end{bmatrix}.
\end{equation}
To simplify we write $\gamma=\ang(N^{s,Y}_p,N^{u,Y}_p)$ so
$$
0<\gamma<\alpha.
$$
Define
\begin{equation}
\label{matrix2}
A=\begin{bmatrix}
 1 & 0\\
 \left(\frac{\lambda(p,Y)+\mu(p,Y)}{\lambda(p,Y)-\mu(p,Y)}\right)\gamma & 1
\end{bmatrix},
\end{equation}
with respect to the splitting $N^Y_p=N^{s,Y}_p\oplus [N^{s,Y}_p]^{\perp}$.

Now write $t_{p,Y}=n+r$ where $n$ is the integer part of $t_{p,Y}$ and $0\leq r<1$.
Consider the partition $t_i=i$ for $0\leq i\leq n$ and $t_{n+1}=t_{p,Y}$.
Clearly $t_{i+1}-t_i\leq 1$ for every $0\leq i\leq n$.

Define the sequence of linear maps $L_i=P^Y_{t_{i+1}-t_i}(Y_{t_i}(p))$ for $0\leq i\leq n$
and $L_{n+1}=A\circ P^Y_r(Y_{t_n}(p))$.
Since $(p,Y)\in\sad_d(Y)\times \mathcal{U}_X$, Lemma \ref{forcaunf} implies
$$
|\lambda(p,Y)-\mu(p,Y)|\geq |\mu(p,Y)|-|\lambda(p,Y)|>1-\lambda^{t_{p,Y}}>1-\lambda.
$$
Therefore,
$$
\left|
\frac{\lambda(p,Y)+\mu(p,Y)}{\lambda(p,Y)-\mu(p,Y)}
\right|
=\left|\frac{2\lambda(p,Y)}{\lambda(p,Y)-\mu(p,Y)}-1\right|
\leq \frac{2}{1-\lambda}+1,
$$
From this we obtain
$$
\left\|L_i-P^Y_{t_{i+1}-t_i}(Y_{t_i}(p))\right\|\leq \|A-I\| C=
\left|\frac{\lambda(p,Y)+\mu(p,Y)}{\lambda(p,Y)-\mu(p,Y)}\right|\gamma C
\leq
$$
$$
\left(\frac{2}{1-\lambda}+1\right)\alpha C<\epsilon,
$$
thus
$$
\left\|L_i-P^Y_{t_{i+1}-t_i}(Y_{t_i}(p))\right\|<\epsilon,
\quad\quad\forall 0\leq i\leq n+1.
$$
By Franks's Lemma for flows, there exists $Z\in W(X)$ such that $Z=Y$ along $O_Y(p)$ and outside $U$
satisfying $P^Z_{t_{i+1}-t_i}(Z_{t_i}(p))=L_i$ for every $0\leq i\leq n$. This implies that
$$
P^Z_{t_{p,Z}}(p)=\displaystyle\prod_{i=0}^{n+1}L_i=A\circ P^Y_{t_{p,Y}}(p),
$$
But now a direct computation using (\ref{matrix1}) and (\ref{matrix2})
implies that $P^Z_{t_{p,Z}}(p)$ is traceless and $\det P^Z_{t_{p,Z}}(p)=\lambda(p,Y)\mu(p,Y)$.
As $p\in\sad_d(Y)$, we have $|\lambda(p,Y)\mu(p,Y)|<1$ thus $P^Z_{t_{p,Y}}(p)$ has a pair of complex eigenvalues of modulus
less than $1$. Therefore, $p\in\sink(Z)$.
Since all sinks of $Y$ are located outside $U$, they are also sinks for $Z$.
This implies that $\card(\Sink(Z))>\card(\Sink(Y))=\card(\sink(X))$, a contradiction which ends the proof.
\end{proof}

\begin{proof}[Proof of Proposition \ref{t.dominalpf}]
It suffices to prove that there exists
$T>0$ such that for every $Y$ close to $X$ and every $p\in\sad_d(Y)$ there exists
$0\leq t\leq T$ such that
$$
\left\|P^Y_t(p)/N^{s,Y}_p \right\|\left\|P^{Y}_{-t}(X_t(p))/N^{u,Y}_p\right\|<\frac{1}{2}.
$$
Otherwise, for every $T>0$ there exists a flow $Y$, as close as we want to $X$, and a periodic point
$p\in\sad_d(Y)$ such that
\begin{equation}
\label{naodomina}
\left\|P^Y_t(p)/N^{s,Y}_p\right\|\left\|P^{Y}_{-t}(X_t(p))/N^{u,Y}_p \right\|\geq\frac{1}{2},
\quad\quad\forall 0\leq t\leq T.
\end{equation} 
We can assume that $t_{p,Y}\to\infty$ as $Y\to X$.
Indeed, by Lemma \ref{forcaunf}
\begin{eqnarray*}
\left\|P^{Y}_{kt_{p,Y}}(p)/N^{s,Y}_p\right\|\left\|P^{Y}_{-kt_{p,Y}}(p)/N^{u,Y}_p\right\|&\leq&\lambda^{kt_{p,Y}}\\
&\to& 0\:\textrm{as}\:k\to\infty,
\end{eqnarray*}
and thus there exists $k_0$, which depends only upon $\lambda$, such that 
$$
\left\|P^{Y}_{k_0t_{p,Y}}(p)/N^{s,Y}_p\right\|\left\|P^{Y}_{-k_0t_{p,Y}}(p)/N^{u,Y}_p\right\|<\frac{1}{2}.
$$ 
This implies $t_{p,Y}\geq\frac{T}{k_0}$, and since $T$ is large, $t_{p,Y}$ is also large.

Let $\mathcal{U}_X$ and $0<\lambda<1$ be given by Lemma \ref{forcaunf}. Let $\mathcal{U}$ and $\alpha$ be given by Lemma \ref{angle}.
Without loss of generality we can assume that $\mathcal{U}=\mathcal{U}_X$.
Put $W(X)=\mathcal{U}$ in the Franks's Lemma for flows with $T=1$ to obtain
the neighborhood $W_0(X)\subset W(X)$ and $\epsilon>0$.
Set
$$
C=\sup\{\|P^Z_t(x)\|:(Z,x,t)\in W(X)\times M\times [0,1]\}.
$$

Choose $\epsilon_0>0$, $\epsilon_1>0$ and $m\in\mathbb{N}^+$ satisfying
\begin{equation}
\label{bala}
(2\epsilon_0+\epsilon_0^2)C\leq\epsilon,\quad
(1+\epsilon_1)\lambda<1,
\quad\quad\epsilon_1<\frac{\alpha}{1+\alpha}\epsilon_0
\end{equation}
and
\begin{equation}
\label{lapa}
\epsilon_1(1+\epsilon_1)^{m}\geq\frac{2}{\alpha}+4.
\end{equation}

Taking $Y$ close to $X$ we can assume that
\begin{equation}
\label{pepa}
Y\in W_0(X),\quad
\tau>2m
\quad\mbox{ and }\quad
\frac{\|P^Y_m(p)/N^{s,Y}_{p}\|}{\|P^Y_m(p)/N^{u,Y}_{p}\|}>\frac{1}{2}
\end{equation}
with $\tau=t_{p,Y}$.
By the last inequality above we can fix unitary vectors $u\in N^{u,Y}_p$ and $v\in N^{s,Y}_p$ satisfying
\begin{equation}
\label{ll}
\frac{\|P^Y_m(p)v\|}{\|P^Y_m(p)u\|}>\frac{1}{2}.
\end{equation}

Define linear maps $P,S: N_p\to N_p$ by
$$
\left\{
\begin{array}{rcl}
P(v)=&v\\
P(u)=&u+\epsilon_1v
\end{array}
\right.
$$
and
$$
\left\{
\begin{array}{rcl}
S(v)=&v\\
S(u)=&u-\bigg(\epsilon_1(1+\epsilon_1)^{\tau-2m-1}\lambda(p,Y)\mu^{-1}(p,Y)\bigg)v.
\end{array}
\right.
$$
Applying Lemma \ref{forcaunf}, the last two inequalities in (\ref{bala}) and Lemma II.10 in \cite{M} we obtain
$$
\|P-I\|\leq \left(\frac{1+\alpha}{\alpha}\right)\epsilon_1<\epsilon_0
$$
and
$$
\|S-I\|\leq \left(\frac{1+\alpha}{\alpha}\right)\bigg(\epsilon_1(1+\epsilon_1)^{\tau-2m-1}\lambda(p,Y)\mu^{-1}(p,Y)\bigg)<
$$
$$
\left(\frac{1+\alpha}{\alpha}\right)\epsilon_1
<\epsilon_0.
$$
Therefore,
\begin{equation}
\label{diablodado}
\|P-I\|<\epsilon_0
\quad\quad\mbox{ and }\quad\quad
\|S-I\|<\epsilon_0.
\end{equation}

Set $\tau=n+r$ where $n$ is the integer part of $\tau$ and $0\leq r<1$.
Define the partition
$t_i=i$ for $0\leq i\leq n$ and $t_{n+1}=\tau$.

Take linear maps $T_j: N_{Y_{t_j}(p)}^Y\to N_{Y_{t_j}(p)}^Y$, $0\leq j\leq n+1$,
$$
T_j/N^{s,Y}_{Y_{t_j}(p)} = \left\{
\begin{array}{rcl}
(1+\epsilon_1)I,& \mbox{if} & 0\leq j\leq m\\
(1+\epsilon_1)^{-1}I, & \mbox{if} & m+1\leq j\leq n+1
\end{array}
\right.
$$
and
$$
T_j/N^{u,Y}_{Y_{t_j}(p)}=I,
\quad\quad\forall 0\leq j\leq n+1,
$$

Applying (\ref{bala}) and Lemma II.10 in \cite{M} we obtain
$$
\|T_j-I\|\leq \frac{1+\alpha}{\alpha}\epsilon_1<\epsilon_0, \quad\quad\forall 0\leq j\leq m,
$$
and
$$
\|T_j-I\|\leq \frac{1+\alpha}{\alpha}\frac{\epsilon_1}{1+\epsilon_1}<\epsilon_0,
\quad\quad \forall m+1\leq j\leq n+1.
$$
Therefore,
\begin{equation}
\label{extra}
\|T_j-I\|\leq \epsilon_0, \quad\quad\forall 0\leq j\leq n+1.
\end{equation}

Define the linear maps $L_j:N_{Y_{t_j}(p)}M\to N_{Y_{t_{j+1}}(p)}M$ by
$$
L_j = \left\{
\begin{array}{rcl}
T_1\circ P^Y_1(p)\circ P,& \mbox{if} & j=0\\
T_{j+1}\circ P^Y_{t_{j+1}-t_j}(Y_{t_j}(p)), & \mbox{if} & 1\leq j\leq n\\
S\circ T_0\circ P^Y_r(Y_{t_n}(p)), & \mbox{if} & j=n+1.
\end{array}
\right.
$$

We can use the first inequality in (\ref{bala}), (\ref{diablodado}) and (\ref{extra}) as in \cite{PS} to prove
$$
\|L_j-P^Y_{t_{j+1}-t_j}(Y_{t_j}(p))\|\leq \epsilon,
\quad\quad\forall 0\leq j\leq n.
$$

Since $Y\in W_0(X)$, we can apply the Franks's Lemma for flows to
obtain a three-dimensional flow $Z$ with
\begin{equation}
\label{laleli}
 Z\in W(X)
\end{equation}
such that
$Z=Y$ along $O_Y(p)$ (thus $p$ is a periodic point of $Z$ with $t_{p,Z}=\tau$)
and $P^Z_{t_{j+1}-t_j}(Z_{t_j}(p))=L_j$ for $0\leq j\leq n+1$.

It follows that
$$
P^Z_{t_{p,Z}}(p)=\displaystyle\prod_{j=0}^{n+1}L_j.
$$
Direct computations together with the definitions of $P$ and $S$ then show
$$
P^Z_{t_{p,Z}}(p)v=(1+\epsilon_1)^{-\tau+2m+1}\lambda(p,Y)v
$$
and
\begin{eqnarray*}
P^Z_{t_{p,Z}}(p)u & = & S\bigg(\mu(p,Y) u-\epsilon_1(1+\epsilon_1)^{\tau-2m-1}\lambda(p,Y) v\bigg) \\
& = & \mu(p,Y)S\bigg(u-\epsilon_1(1+\epsilon_1)^{\tau-2m-1}\lambda(p,Y)\mu^{-1}(p,Y)v\bigg)\\
& = & \mu(p,Y)\bigg(u-
\epsilon_1(1+\epsilon_1)^{\tau-2m-1}\lambda(p,Y)\mu^{-1}(p,Y)v+\\
& &\epsilon_1(1+\epsilon_1)^{\tau-2m-1}\lambda(p,Y)\mu^{-1}(p,Y)v
\bigg)\\
& = & \mu(p,Y)u.
\end{eqnarray*}

As $|(1+\epsilon_1)^{-\tau+2m+1}\lambda(p,Y)|<1$ and $|\mu(p,Y)|>1$ we obtain
that $p\in \sad(Z)$,
\[
N^{s,Z}_p=N^{s,Y}_p,
\lambda(p,Z)=(1+\epsilon_1)^{-\tau+2m+1}\lambda(p,Y),
N^{u,Z}_p=N^{u,Y}_p,
\mu(p,Z)=\mu(p,Y).
\]
In particular,
$$
|\det(DZ_{t_{p,Z}}(p))|=(1+\epsilon_1)^{-\tau+2m+1}|\lambda(p,Y)\mu(p,Y)|\leq
|\det(DY_{t_{p,Y}}(p))|<1
$$
proving
\begin{equation}
 \label{pape}
p\in \sad_d(Z).
\end{equation}

Now, we finish as in p. 971 of \cite{PS}.

Since $v\in N^{s,Z}_p$ and $u\in N^{u,Z}_p$ we obtain,
$u_1=P^Z_m(p)(u)\in N^{u,Z}_{Z_m(p)}$ and $u_2=P^Z_m(p)(v)\in N^{s,Z}_{Z_m(p)}$.
But
$$
u_1=P^Y_m(p)(u)+\epsilon_1(1+\epsilon_1)^mP^Y_m(p)(v)\quad\mbox{ and }\quad
u_2=(1+\epsilon_1)^mP^Y_m(p)(v).
$$
As $\epsilon_1u_2\in N^{s,Z}_{Z_m(p)}$, Lemma II.10 in \cite{M} yields
$$
\|u_1-\epsilon_1u_2\|\geq\frac{\beta}{1+\beta}\|u_1\|,
\quad\mbox{ where }\quad
\beta=\ang(N^{u,Z}_{Z_{m}(p)},N^{s,Z}_{Z_{m}(p)}).
$$

Consequently,
$$
\|P^Y_m(p)(u)\|=\|u_1-\epsilon_1u_2\|\geq\frac{\beta}{1+\beta}\|u_1\|
\geq
$$
$$
\frac{\beta}{1+\beta}\left(\epsilon_1(1+\epsilon_1)^m\|P^Y_m(p)(v)\|-
\|P^Y_m(p)(u)\|\|\right)
\overset{(\ref{ll})}{\geq}
$$
$$
\frac{\beta}{1+\beta}\left(
\frac{\epsilon_1(1+\epsilon_1)^m}{2}-1
\right)\|P^Y_m(p)(u)\|
$$
yielding
$$
\frac{1+\beta}{\beta}\geq\frac{\epsilon_1(1+\epsilon_1)^m}{2}-1.
$$
As $\beta=\ang(N^{u,Z}_{Z_{m}(p)},N^{s,Z}_{Z_{m}(p)})$ we obtain
$$
\ang(N^{u,Z}_{Z_{m}(p)},N^{s,Z}_{Z_{m}(p)})\leq\frac{2}{\epsilon_1(1+\epsilon_1)^m-4}.
$$
Applying (\ref{lapa}) we arrive to $\ang(N^{u,Z}_{Z_{m}(p)},N^{s,Z}_{Z_{m}(p)})\leq\alpha$. But now (\ref{pepa}) and
(\ref{laleli}) contradict Lemma \ref{forcaunf}
and the proof follows.
\end{proof}

\vspace{20pt}

\end{document}